\newtheorem{theorem}{Theorem}[section]
\newtheorem{defn}{Definition}[section]
\newtheorem{lemma}{Lemma}[section]
\newtheorem{example}{Example}[section]
\title{Lower and upper bounds for $H$-eigenvalues of  even order real symmetric tensors}
\author{Hongwei Jin\thanks{College of Mathematics and Econometrics,
    Hunan University, 410082, Changsha, P.R. China.
    Email:  hw-jin@hotmail.com }
\and M. Rajesh Kannan\thanks{Department of Mathematics, Technion - Israel Institute of Technology, Haifa 32000, Israel. Email: rajeshkannan1.m@gmail.com }
\and Minru Bai \thanks{College of Mathematics and Econometrics,
    Hunan University, 410082, Changsha, P.R. China. E-mail: minru-bai@163.com }
}
\begin{document}
\maketitle
\begin{abstract}
In this article, we define new classes of tensors called double $\overline{B}$-tensors, quasi-double $\overline{B}$-tensors and establish some of their properties. Using these properties, we construct new regions viz., double $\overline{B}$-intervals and quasi-double $\overline{B}$-intervals,  which contain all the $H$-eigenvalues of real even order symmetric tensors. We prove that the double $\overline{B}$-intervals is contained in the quasi-double $\overline{B}$-intervals and  quasi-double ${\overline{B}}$-intervals provide supplement  information on the Brauer-type eigenvalues inclusion set of tensors. These are analogous to the double $\overline{B}$-intervals of matrices established by J. M. Pe\~na~[On an alternative to Gerschgorin circles and ovals of Cassini, Numer. Math.
95 (2003), no. 2, 337-345.] \\

\noindent{\bf AMS Subject Classification(2010):} 15A18, 15A69, 65F15.\\

\noindent{\bf Keywords:} double $\overline{B}$-tensors, quasi-double $\overline{B}$-tensors, double ${\overline{B}}$-intervals, quasi-double ${\overline{B}}$-intervals,  Brauer-type eigenvalues inclusion theorem, $H$-eigenvalues, $Z$-tensors.

\end{abstract}

\section{Introduction}

A tensor can be regarded as a higher-order generalization of a matrix, which takes the form
\begin{equation}
  \mathcal{A}=(a_{i_1\dots i_m}), \  \  a_{i_1\dots i_m}\in\mathbb{R}, \  \  i_j\in[n] :=\{1,\dots ,n\}, \ \ j\in[m].
\end{equation}
Such a multidimensional array is called an $m$-order $n$-dimensional real tensor and the set of all  $m$-order $n$-dimensional real tensors is denoted by $\mathcal{T}(\mathbb{R}^n,m)$.  A tensor $\mathcal{A}$ is called \emph{symmetric} if its entries $a_{i_1\dots i_m}$ are invariant under any permutation of their indices $\{i_1, \dots, i_m\}$.

Let $\mathbb{R}^{n} (\mathbb{C}^n)$ denote the $n$-dimensional real (complex) vector space. Vectors  are denoted by lower case letters ($x, y, \dots$), matrices by upper case letters ($A, B, \dots$) and tensors by calligraphic upper case letters ($\mathcal{A}, \mathcal{B}, \dots$). The $i^{th}$ entry of a vector $x$ is denoted by $x_i$, the $(i,j)^{th}$ entry of a matrix $A$ is denoted by $a_{ij}$  and the $(i_1,\dots, i_m)^{th}$ entry of a tensor $\mathcal{A}$ is denoted by $a_{i_1\dots i_m}$.

Given a vector $x=(x_1,\dots ,x_n)^T\in\mathbb{C}^n$, we define $\mathcal{A}x^{m-1}$ to be a vector in $\mathbb{C}^n$ whose $i$th coordinate is
\begin{equation}
  (\mathcal{A}x^{m-1})_i=\sum^n_{i_2,\dots ,i_m=1}a_{ii_2\dots i_m}x_{i_2}\cdot\cdot\cdot x_{i_m}.
\end{equation}

For $ x \in \mathbb{C}^n $ and a natural number $k$, the vector $x^{[k]}$  is the Hadamard power of $x$, i.e. $x^{[k]}_i = x^{k}_i$ for all $i$.

\begin{defn}\cite{Qi}
{\rm
Let $\mathcal{A}\in\mathcal{T}(\mathbb{R}^n,m)$. If there is a nonzero vector $x\in\mathbb{C}^n$ and a number $\lambda\in\mathbb{C}$ such that $$\mathcal{A}x^{m-1}=\lambda x^{[m-1]}$$
 , then $\lambda$ is called an \emph{eigenvalue} of the tensor $\mathcal{A}$ and $x$ an \emph{eigenvector} of $\mathcal{A}$ associated with $\lambda$. Furthermore, we say $\lambda$ is an \emph{$H$-eigenvalue} with the corresponding \emph{$H$-eigenvector} of $\mathcal{A}$ if they are real.
}
\end{defn}

 A symmetric tensor $\mathcal{A}\in\mathcal{T}(\mathbb{R}^n,m)$ is said to be  \emph{positive semidefinite} if for any vector $x \in \mathbb{R}^n$, $\mathcal{A}x^m  :=   \sum\limits_{i_1, \dots , i_m =1}^{n} a_{i_1\dots i_m}x_{i_1}\cdots x_{i_m} \geq 0$;  $\mathcal{A}$ is  \emph{positive definite} if for any nonzero vector $x \in \mathbb{R}^n$, $\mathcal{A}x^{m} >0$. From the definition it is clear that, if $m$ is odd, then there is no nontrivial positive semidefinite tensor.

Positive definite homogenous polynomials and positive semidefinite polynomials (nonnegative polynomials) are important in the field of dynamical systems, optimization, etc. With each homogenous polynomial we can associate a symmetric tensor. Checking the positive (semi)definiteness of a homogenous polynomial is equivalent to  checking the positive (semi)definiteness of the symmetric tensor  associated with it.  For details about the applications we refer to \cite{nie}, \cite{Qi} and the references therein.

Qi characterized the positive definite and positive semidefinite tensors in terms of their $H$-eigenvalues.

\begin{theorem}\cite[Theorem~5]{Qi}
Let $\mathcal{A}$ be an $m$-order $n$-dimensional symmetric tensor such that $m$ is an even integer. Then $\mathcal{A}$ is positive definite (positive semidefinite) if and only if all its $H$-eigenvalues are positive (nonnegative).
\end{theorem}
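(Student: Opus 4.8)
The plan is to prove the two directions separately, relying on a variational characterization of certain $H$-eigenvalues obtained via Lagrange multipliers, in which the symmetry of $\mathcal{A}$ plays an essential role.

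For the ``only if'' direction, suppose $\lambda$ is an $H$-eigenvalue of $\mathcal{A}$ with real $H$-eigenvector $x \neq 0$, so that $\mathcal{A}x^{m-1} = \lambda x^{[m-1]}$. Taking the inner product of both sides with $x$ gives $\mathcal{A}x^m = \lambda \sum_{i=1}^n x_i^m$. Since $m$ is even and $x \neq 0$, at least one $x_i^m > 0$ while all summands are nonnegative, so $\sum_i x_i^m > 0$ and hence $\lambda = \mathcal{A}x^m \big/ \sum_i x_i^m$. If $\mathcal{A}$ is positive definite the numerator is positive, forcing $\lambda > 0$; if $\mathcal{A}$ is positive semidefinite then $\lambda \geq 0$. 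This settles one implication in each part of the statement.

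For the ``if'' direction I would argue by contraposition. Consider $S = \{x \in \mathbb{R}^n : \sum_{i=1}^n x_i^m = 1\}$. Because $m$ is even, each $x_i^m$ is nonnegative, so $|x_i| \leq 1$ on $S$; thus $S$ is closed, bounded, hence compact, and it is nonempty. Since $\mathcal{A}x^m$ is homogeneous of degree $m$ and $\sum_i x_i^m > 0$ for every $x \neq 0$, rescaling shows that $\mathcal{A}$ is positive definite if and only if $\mu := \min_{x \in S} \mathcal{A}x^m > 0$, and positive semidefinite if and only if $\mu \geq 0$. By continuity and compactness the value $\mu$ is attained at some $x^* \in S$. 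The key step is to check that $x^*$ is an $H$-eigenvector of $\mathcal{A}$ with $H$-eigenvalue $\mu$: since $x^* \neq 0$, the constraint gradient $m\,(x^*)^{[m-1]}$ is nonzero, so the Lagrange multiplier theorem applies at $x^*$, and using the symmetry of $\mathcal{A}$ to get $\nabla_x(\mathcal{A}x^m) = m\,\mathcal{A}x^{m-1}$ we obtain $m\,\mathcal{A}(x^*)^{m-1} = \lambda\, m\,(x^*)^{[m-1]}$ for some scalar $\lambda$; pairing with $x^*$ and using $x^* \in S$ yields $\lambda = \mathcal{A}(x^*)^m = \mu$. Consequently, if $\mathcal{A}$ is not positive definite then $\mu \leq 0$ is an $H$-eigenvalue, contradicting the hypothesis that all $H$-eigenvalues are positive; and if $\mathcal{A}$ is not positive semidefinite then $\mu < 0$ is an $H$-eigenvalue, contradicting the hypothesis that all $H$-eigenvalues are nonnegative.

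The main obstacle is the clean identity $\nabla_x(\mathcal{A}x^m) = m\,\mathcal{A}x^{m-1}$, which is exactly where the symmetry of $\mathcal{A}$ enters; without symmetry one only obtains an averaged combination of contractions of $\mathcal{A}$, and the Lagrange condition would not reduce to the eigenvalue equation. A secondary point requiring care is that the minimizer $x^*$ is genuinely nonzero (which is automatic from $x^* \in S$), so that it is a legitimate $H$-eigenvector and the constraint qualification in the Lagrange multiplier theorem is satisfied.
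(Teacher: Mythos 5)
The paper states this theorem as a cited result from Qi and supplies no proof of its own, so there is nothing internal to compare against; your argument is correct and is essentially Qi's original variational proof. Both directions are sound: the pairing identity $\lambda = \mathcal{A}x^m \big/ \sum_i x_i^m$ (valid since $m$ even forces $\sum_i x_i^m>0$ for $x\neq 0$) gives the forward implication, and the Lagrange-multiplier argument on the compact level set $S$ correctly exhibits the minimum $\mu$ as an $H$-eigenvalue --- which also disposes of the vacuous-truth worry by showing $H$-eigenvalues actually exist --- with the symmetry of $\mathcal{A}$ entering exactly where you say it does.
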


Thus, from the above theorem, the location of the $H$-eigenvalues of an even order symmetric tensor is useful in checking the positive definiteness (positive semidefiniteness) of tensors. The main purpose of this article is to give  upper and lower bounds for the $H$-eigenvalues of even order symmetric tensors. Next we recall a couple of known results in this direction.

For each $i\in[n]$, denote
\begin{equation}\label{m1}
r_i(\mathcal{A})=\sum\big\{|a_{ii_2\dots i_m}| : i_j\in[n], j=2,\dots ,m, (i_2,\dots ,i_m)\neq(i,\dots ,i)\big\}.
\end{equation}

Qi et al. \cite{Qi,QS} asserted that the eigenvalues of a tensor $\mathcal{A}$ have a similar statement as the Gerschgorin circle of the eigenvalues of matrices. One can rewrite the result for $H$-eigenvalues of a real tensor as follows.

\begin{theorem}\label{sg}\cite[Theorem \ 2]{QS}
Let $\mathcal{A}=(a_{i_1\dots i_m})\in \mathcal{T}(\mathbb{R}^n,m)$ and $\lambda$ be an $H$-eigenvalue of $\mathcal{A}$. Then,
\begin{equation}\label{dkkd}
 \lambda\in\Gamma(\mathcal{A})=\bigcup_{i=1}^n\Gamma_i(\mathcal{A}),
\end{equation}
where $\Gamma_i(\mathcal{A})=\Big[a_{i\dots i}-r_i(\mathcal{A}), \ \ a_{i\dots i}+r_i(\mathcal{A})\Big]$.
\end{theorem}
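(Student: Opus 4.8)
The plan is to mimic the classical proof of Gershgorin's circle theorem, with the coordinate $x_i$ of an eigenvector replaced by the Hadamard power $x_i^{m-1}$, and with a maximum-modulus argument selecting the relevant index $i$. The realness in the notion of an $H$-eigenvalue enters only to guarantee that $\lambda$ and $x$ are real, so that the claimed inclusion into the real interval $\Gamma_i(\mathcal{A})$ makes sense; the analytic heart of the argument is just the triangle inequality.

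Concretely, I would proceed as follows. Since $\lambda$ is an $H$-eigenvalue, fix a real $H$-eigenvector $x=(x_1,\dots,x_n)^T\in\mathbb{R}^n\setminus\{0\}$ with $\mathcal{A}x^{m-1}=\lambda x^{[m-1]}$, and choose an index $i\in[n]$ with $|x_i|=\max_{k\in[n]}|x_k|$; because $x\neq 0$ this maximum is strictly positive, so $|x_i|>0$. Writing out the $i$th coordinate of the eigenvalue equation gives $\sum_{i_2,\dots,i_m=1}^n a_{ii_2\dots i_m}x_{i_2}\cdots x_{i_m}=\lambda x_i^{m-1}$. I would then split off the purely diagonal term $(i_2,\dots,i_m)=(i,\dots,i)$ and rearrange to obtain $(\lambda-a_{i\dots i})\,x_i^{m-1}=\sum_{(i_2,\dots,i_m)\neq(i,\dots,i)}a_{ii_2\dots i_m}x_{i_2}\cdots x_{i_m}$. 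Taking absolute values, applying the triangle inequality, and using $|x_{i_j}|\le |x_i|$ for every $j\in\{2,\dots,m\}$ (which yields $|x_{i_2}\cdots x_{i_m}|\le |x_i|^{m-1}$), we get $|\lambda-a_{i\dots i}|\,|x_i|^{m-1}\le \Big(\sum_{(i_2,\dots,i_m)\neq(i,\dots,i)}|a_{ii_2\dots i_m}|\Big)|x_i|^{m-1}=r_i(\mathcal{A})\,|x_i|^{m-1}$, by the definition of $r_i(\mathcal{A})$ in \eqref{m1}. Dividing through by $|x_i|^{m-1}>0$ gives $|\lambda-a_{i\dots i}|\le r_i(\mathcal{A})$, i.e. $\lambda\in\Gamma_i(\mathcal{A})\subseteq\Gamma(\mathcal{A})$.

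There is essentially no serious obstacle here: the proof is a direct homogeneous-degree analogue of the matrix case. The only points that need a word of care are that the index $i$ realizing the largest modulus indeed has $x_i\neq 0$ (so that the final division is legitimate), and that the off-diagonal product $|x_{i_2}\cdots x_{i_m}|$ is bounded by $|x_i|^{m-1}$ uniformly over all multi-indices; both follow immediately from the choice of $i$. Everything else is bookkeeping with the triangle inequality and the definition of $r_i(\mathcal{A})$.
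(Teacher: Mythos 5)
Your proof is correct and is exactly the standard Gershgorin-type argument used for this result in the cited source (Qi/Qi--Song); the paper itself states the theorem without proof. The two points you flag --- positivity of $|x_i|$ at the maximizing index and the bound $|x_{i_2}\cdots x_{i_m}|\le |x_i|^{m-1}$ --- are indeed the only places requiring care, and you handle both.
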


We call the interval $\Gamma(\mathcal{A})$ in $(\ref{dkkd})$ the Gerschgorin eigenvalues inclusion set of tensors. Li et al.  established a Brauer-type eigenvalues inclusion set for an arbitrary complex tensor \cite[Theorem 2.1]{LLK} and  showed that the Brauer-type eigenvalues inclusion set is contained in the Gerschgorin eigenvalues inclusion set  \cite[Theorem 2.3]{LLK}. We can restate the Brauer-type eigenvalues inclusion theorem for $H$-eigenvalues of a real tensor as follows:

\begin{theorem}\label{s0g}\cite[Theorem \ 2.1]{LLK}
Let $\mathcal{A}=(a_{i_1\dots i_m})\in \mathcal{T}(\mathbb{R}^n,m)$ and $\lambda$ be an $H$-eigenvalue of $\mathcal{A}$. Then,
\begin{equation}\label{dnnd}
 \lambda\in\Omega(\mathcal{A})=\bigcup_{i,j=1\atop i\neq j}^n\Omega_{ij}(\mathcal{A}),
\end{equation}
where $$\Omega_{ij}(\mathcal{A})=\{z\in\mathbb{C} : |z-a_{i\dots i}|(|z-a_{j\dots j}|-r_j^i(\mathcal{A}))\leq r_i(\mathcal{A})|a_{ji\dots i}|\},$$
 and
 $$r_j^i(\mathcal{A})=r_j(\mathcal{A})-|a_{ji\dots i}|=
\sum\limits_{j_2,\dots ,j_m=1\atop{(j_2,\dots ,j_m)\neq(i,\dots ,i)\atop(j_2,\dots ,j_m)\neq(j,\dots ,j)}}^n|a_{jj_2\dots j_m}|.$$
\end{theorem}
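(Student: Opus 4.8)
The plan is to adapt the classical Brauer (Ostrowski) argument for matrices to the tensor setting, working directly with the eigenequation $\mathcal{A}x^{m-1}=\lambda x^{[m-1]}$. Let $\lambda$ be an $H$-eigenvalue with real eigenvector $x$, and pick an index $i$ with $|x_i|=\max_k|x_k|$. Since $x\neq 0$, $|x_i|>0$. The first step is to isolate the $i$th equation: from $(\mathcal{A}x^{m-1})_i=\lambda x_i^{m-1}$ we peel off the diagonal term to get $(\lambda-a_{i\dots i})x_i^{m-1}=\sum_{(i_2,\dots,i_m)\neq(i,\dots,i)}a_{ii_2\dots i_m}x_{i_2}\cdots x_{i_m}$, and estimating each $|x_{i_2}\cdots x_{i_m}|\leq|x_i|^{m-1}$ yields $|\lambda-a_{i\dots i}|\,|x_i|^{m-1}\leq r_i(\mathcal{A})\,|x_i|^{m-1}$, hence $|\lambda-a_{i\dots i}|\leq r_i(\mathcal{A})$. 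If this index $i$ is \emph{not} unique as an argmax, or more to the point, the refinement comes from a second index, so next I would let $j\neq i$ be \emph{any} other index and write down the $j$th equation as well.

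\textbf{The two-index estimate.} For the chosen $j$, split $(\mathcal{A}x^{m-1})_j=\lambda x_j^{m-1}$ into the diagonal term $a_{j\dots j}x_j^{m-1}$, the term $a_{ji\dots i}x_i^{m-1}$ (the coefficient multiplying $x_i^{m-1}$), and the remaining sum whose coefficients total $r_j^i(\mathcal{A})$. This gives
\begin{equation*}
(\lambda-a_{j\dots j})x_j^{m-1}-a_{ji\dots i}x_i^{m-1}=\sum_{(j_2,\dots,j_m)\neq(i,\dots,i),(j,\dots,j)}a_{jj_2\dots j_m}x_{j_2}\cdots x_{j_m}.
\end{equation*}
Bounding the right side by $r_j^i(\mathcal{A})|x_i|^{m-1}$ and rearranging yields $|\lambda-a_{j\dots j}|\,|x_j|^{m-1}\leq |a_{ji\dots i}|\,|x_i|^{m-1}+r_j^i(\mathcal{A})|x_i|^{m-1}$, i.e. $(|\lambda-a_{j\dots j}|-r_j^i(\mathcal{A}))|x_j|^{m-1}\leq |a_{ji\dots i}|\,|x_i|^{m-1}$. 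Multiplying the $i$th inequality $|\lambda-a_{i\dots i}|\,|x_i|^{m-1}\leq r_i(\mathcal{A})|x_i|^{m-1}$ (stated instead as $|\lambda - a_{i\dots i}||x_i|^{m-1} \le r_i(\mathcal{A})|x_j|^{m-1}$ is false in general — so one multiplies the two derived inequalities) gives $|\lambda-a_{i\dots i}|(|\lambda-a_{j\dots j}|-r_j^i(\mathcal{A}))|x_i|^{m-1}|x_j|^{m-1}\leq r_i(\mathcal{A})|a_{ji\dots i}|\,|x_i|^{2(m-1)}$; after cancelling $|x_i|^{m-1}$ (positive) one is left with $|\lambda-a_{i\dots i}|(|\lambda-a_{j\dots j}|-r_j^i(\mathcal{A}))|x_j|^{m-1}\leq r_i(\mathcal{A})|a_{ji\dots i}|\,|x_i|^{m-1}$, and using $|x_j|\leq|x_i|$ finishes provided the bracket is nonnegative.

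\textbf{The main obstacle} is handling the sign of $|\lambda-a_{j\dots j}|-r_j^i(\mathcal{A})$ and the degenerate cases. If $|\lambda-a_{i\dots i}|=0$, then $\lambda\in\Omega_{ij}(\mathcal{A})$ trivially. If $|x_j|<|x_i|$ strictly for every $j\neq i$, one must be slightly careful that the cancellation of powers of $|x_i|$ and $|x_j|$ goes through; the clean way is to multiply the $i$th inequality (in the form $|\lambda-a_{i\dots i}|\,|x_i|^{m-1}\leq r_i(\mathcal{A})|x_i|^{m-1}$) by the $j$th inequality, obtaining the product bound, then replace one factor of $|x_i|^{m-1}$ on the left using $|x_j|^{m-1}\leq |x_i|^{m-1}$ and divide. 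When $|\lambda-a_{j\dots j}|-r_j^i(\mathcal{A})<0$ the defining inequality of $\Omega_{ij}$ holds automatically (left side negative, right side nonnegative), so $\lambda\in\Omega_{ij}(\mathcal{A})\subseteq\Omega(\mathcal{A})$ in that case too. Thus in every case $\lambda$ lies in $\Omega_{ij}(\mathcal{A})$ for the second index $j$, hence in $\Omega(\mathcal{A})$, which is what we wanted. (I would note this is the real-$H$-eigenvalue specialization of \cite[Theorem 2.1]{LLK}; the only simplification over the general complex case is that $x$ and $\lambda$ may be taken real, but the modulus estimates are identical.)
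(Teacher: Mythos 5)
The paper merely quotes this theorem from [LLK] without reproducing its proof, so I am judging your argument on its own. Your overall strategy (two eigen-equations, multiply, divide by the nonzero powers) is the right one, but you have the roles of the two indices reversed, and this breaks two consecutive steps. Having chosen $i$ to be the index of the \emph{largest} component, you bound the residual sum in the $j$-th equation by $r_j^i(\mathcal{A})|x_i|^{m-1}$ and then assert this is ``i.e.'' $(|\lambda-a_{j\dots j}|-r_j^i(\mathcal{A}))|x_j|^{m-1}\le|a_{ji\dots i}||x_i|^{m-1}$. That rearrangement silently replaces $r_j^i(\mathcal{A})|x_i|^{m-1}$ by the smaller quantity $r_j^i(\mathcal{A})|x_j|^{m-1}$ on the side being subtracted, which would need $|x_i|\le|x_j|$ --- the opposite of what your choice of $i$ gives. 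The same reversal recurs at the end: from $L\,|x_j|^{m-1}\le R\,|x_i|^{m-1}$ with $L,R\ge0$, the bound $|x_j|\le|x_i|$ does \emph{not} yield $L\le R$ (take $x_j=0$: the hypothesis is vacuous). Indeed the conclusion your argument would deliver --- $\lambda\in\Omega_{ij}(\mathcal{A})$ with $i$ the argmax and $j$ arbitrary --- is false: for the matrix $A=\left(\begin{smallmatrix}0&1&1\\3&0&0\\0&0&100\end{smallmatrix}\right)$ the eigenvalue $\lambda=\sqrt3$ has eigenvector $(1,\sqrt3,0)^T$, so the argmax is $i=2$, yet $\Omega_{23}(A)=\{z:|z|\,|z-100|\le0\}$ does not contain $\sqrt3$.

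The repair is to put the argmax index in the \emph{second} slot of $\Omega_{ij}$. Let $|x_t|=\max_k|x_k|>0$. If $x_j=0$ for all $j\ne t$, the eigen-equations force $\lambda=a_{t\dots t}$ and $\lambda\in\Omega_{tj}(\mathcal{A})$ trivially. Otherwise pick $j\ne t$ with $x_j\ne0$. The $j$-th equation gives $|\lambda-a_{j\dots j}|\,|x_j|^{m-1}\le r_j(\mathcal{A})|x_t|^{m-1}$, while the $t$-th equation, with the term $a_{tj\dots j}x_j^{m-1}$ peeled off and every remaining product bounded by $|x_t|^{m-1}$, gives
\begin{equation*}
\bigl(|\lambda-a_{t\dots t}|-r_t^j(\mathcal{A})\bigr)|x_t|^{m-1}\le|a_{tj\dots j}|\,|x_j|^{m-1};
\end{equation*}
here the subtraction is legitimate because the same power $|x_t|^{m-1}$ multiplies both the left-hand side and the residual bound, so no comparison of $|x_t|$ with $|x_j|$ is needed. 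If the bracket is negative, $\lambda\in\Omega_{jt}(\mathcal{A})$ trivially; otherwise multiply the two inequalities and divide by $|x_t|^{m-1}|x_j|^{m-1}>0$ to obtain $|\lambda-a_{j\dots j}|\bigl(|\lambda-a_{t\dots t}|-r_t^j(\mathcal{A})\bigr)\le r_j(\mathcal{A})|a_{tj\dots j}|$, i.e.\ $\lambda\in\Omega_{jt}(\mathcal{A})\subseteq\Omega(\mathcal{A})$.
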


In this article, in Section \ref{notations}, we collect some definitions, known results and correct minor mistakes in couple of results proved in \cite{LL}.  In Section \ref{dt-qdt},  we define two new classes tensors viz.,  double $\overline{B}$-tensors, quasi-double $\overline{B}$-tensors and establish their properties. Using this properties, in Section \ref{eigen-region}, we construct two new regions called double ${\overline{B}}$-intervals and quasi-double ${\overline{B}}$-intervals, containing all the $H$-eigenvalues of even order symmetric tensors. These regions have a nature similar to the Brauer-type eigenvalues inclusion set $\Omega(\mathcal{A})$ stated in $(\ref{dnnd})$.  We prove that quasi-double ${\overline{B}}$-intervals is smaller than  double ${\overline{B}}$-intervals and there is no inclusion relation between  quasi-double ${\overline{B}}$-intervals and the Brauer-type eigenvalues inclusion set. Hence, it is nature to construct an intervals $\Upsilon(\mathcal{A})$, narrower than both quasi-double ${\overline{B}}$-intervals and  Brauer-type eigenvalues inclusion set $\Omega(\mathcal{A})$, by intersecting the two. These comparison results are done in Section \ref{compare}. In Section \ref{conclude}, we draw some concluding remarks.

\section{Notation, Definitions and Preliminary results}\label{notations}

The purpose of this section is twofold -  to collect  known definitions and results; and  to correct  minor mistakes in couple of results in \cite{LL}.

The $m$-order $n$-dimensional \emph{identity tensor}, denoted by $\mathcal{I}$, is the tensor with entries
$$a_{i_1\dots i_m}=\left\{
                  \begin{array}{ll}
                    1, & \text{if} \ \ i_1=\dots=i_m, \\
                    0, & \text{otherwise}.
                  \end{array}
                \right.
$$

Chen et al. \cite{CQ} defined the following $k$th row tensor dealing with some problems of circulant tensors.
\begin{defn}\cite{CQ}
{\rm
Let $\mathcal{A}=(a_{i_1\dots i_m})\in  \mathcal{T}(\mathbb{R}^n,m)$. Then, $\mathcal{A}_k=(a^{(k)}_{i_1\dots i_{m-1}})\in\mathcal{T}(\mathbb{R}^n,m-1)$ is called  the \emph{$k$th row tensor} if $a^{(k)}_{i_1\dots i_{m-1}}=a_{ki_1\dots i_{m-1}}$, where $k, i_1,\dots ,i_{m-1} \in[n]$.}
\end{defn}

Let $sign(x)$ be sign function, that is, \begin{equation*}
 sign(x)=\left\{
        \begin{array}{lll}
          1, &  \  x>0, \\
         0, &  \  x=0,\\
        -1, &  \  x<0.\\
\end{array}
      \right.
\end{equation*}

For a tensor $\mathcal{A}=(a_{i_1\dots i_m})\in  \mathcal{T}(\mathbb{R}^n,m)$ and  $i\in[n]$, define
\begin{equation}\label{88}
\beta_i(\mathcal{A})= \max\{0, \  a_{ii_2\dots i_m} \   : \   (i_2,\dots ,i_m)\neq(i,\dots ,i)\},
\end{equation}
\begin{equation}\label{99}
\gamma_i(\mathcal{A})= \min\{0,  \  a_{ii_2\dots i_m} \   :  \  (i_2,\dots ,i_m)\neq(i,\dots ,i)\},
\end{equation}
\begin{equation}\label{77}
\Delta_i(\mathcal{A}) = \sum\{(\beta_i (\mathcal{A}) - a_{ii_2\dots i_m}) : \ \  (i_2,\dots ,i_m)\neq(i,\dots ,i) \},
\end{equation}
\begin{equation}\label{66}
\Delta_j^i(\mathcal{A}) = \sum\{(\beta_{j}(\mathcal{A})-a_{jj_2\dots j_m}): (j_2,\dots ,j_m)\neq(i,\dots ,i), (j_2,\dots ,j_m)\neq(j,\dots ,j)\},
\end{equation}
\begin{equation}\label{55}
\Theta_i(\mathcal{A}) = \sum\{( a_{ii_2\dots i_m}  - \gamma_i (\mathcal{A})): \ \  (i_2,\dots ,i_m)\neq(i,\dots ,i) \}
\end{equation}
and
\begin{equation}\label{44}
\Theta_j^i(\mathcal{A}) = \sum\{ (a_{jj_2\dots j_m}  - \gamma_j (\mathcal{A})): \ \  (j_2,\dots ,j_m)\neq(i,\dots ,i), (j_2,\dots ,j_m)\neq(j,\dots ,j) \}.
\end{equation}

Next we recall the definitions of double $B$-tensor and quasi-double ${B}$-tensor defined by Li  et al. in \cite{LL}.

\begin{defn}\label{doubleb}
{\rm
Let $\mathcal{A}=(a_{i_1\dots i_m})\in  \mathcal{T}(\mathbb{R}^n,m)$ with $a_{i\dots i} > \beta_{i}(\mathcal{A})$ for all $i$. Then $\mathcal{A}$ is  said to be a \emph{double $B$-tensor} if :

\begin{itemize}
\item[(a)] for any $i \in \{1,\dots, n\}, a_{i\dots i} - \beta_i(\mathcal{A}) \geq \Delta_i(\mathcal{A})$,
\item[(b)] for all $i, j \in \{1, \dots , n\}, i \neq j, (a_{i\dots i}-\beta_{i}(\mathcal{A}))(a_{j\dots j}-\beta_{j}(\mathcal{A})) > \Delta_i(\mathcal{A})\Delta_j(\mathcal{A}).$
\end{itemize}
}
\end{defn}

Note that Chen et al. $\cite{CYY}$ also gave a definition of double $B$-tensor, which is different from the  above one. A double $B$-tensor defined in \cite{CYY} need not satisfy the condition $(a)$ of definition \ref{doubleb}. If we drop condition $(a)$ from definition \ref{doubleb}, then even order symmetric double $B$-tensors need not be positive definite\cite{LL}. Positive definiteness of double $B$-tensors plays a crucial role in locating the $H$-eigenvalues of even order symmetric tensors. So, in this paper, we follow the definition of double $B$-tensor as in \cite{LL}.

\begin{defn}\cite{LL}
{\rm
Let $\mathcal{A}=(a_{i_1\dots i_m})\in  \mathcal{T}(\mathbb{R}^n,m)$ with $a_{i\dots i} > \beta_{i}(\mathcal{A})$ for all $i$. Then $\mathcal{A}$ is  said to be a \emph{quasi-double $B$-tensor} if :
$$(a_{i\dots i}-\beta_{i}(\mathcal{A}))(a_{j\dots j}-\beta_{j}(\mathcal{A})-\Delta_j^i(\mathcal{A})) > (\beta_i(\mathcal{A}) - a_{ji\dots i} )\Delta_j(\mathcal{A}).$$
}
\end{defn}

Next we recall some definitions.

\begin{defn}$\cite{LL}$
{\rm
Let $\mathcal{A}$ be an $m$-order $n$-dimensional tensor. Then $\mathcal{A}$ is called a \emph{doubly strictly diagonally dominant tensor (DSDD)} if:
\begin{itemize}
\item[(a)]  $|a_{i\dots i}||a_{j\dots j}| > r_i(\mathcal{A})r_j(\mathcal{A}), ~\mbox{for all }~ i,j , i \neq j,$
\item[(b)] when $m>2$, $|a_{i \dots i}| \geq r_i(\mathcal{A})$ for $i \in \{1,\dots, n\}$ .
\end{itemize}
}
\end{defn}

\begin{defn}$\cite{LL}$
{\rm Let $\mathcal{A}$ be an $m$-order $n$-dimensional tensor. Then $\mathcal{A}$ is called a \emph{quasi-doubly strictly diagonally dominant tensor (Q-DSDD)} if: $$|a_{i\dots i}|(|a_{j\dots j}|-r_j^i(\mathcal{A})) > r_i(\mathcal{A})|a_{ji\dots i}|, ~\mbox{for all }~ i,j , i \neq j. $$}
\end{defn}
A tensor $\mathcal{A}$ is called a \emph{$Z$-tensor} if there exists a tensor $\mathcal{D}$ with nonnegative entries and a real number $s$ such that $\mathcal{A} = s\mathcal{I} - \mathcal{D}$.

Li et al. \cite{LL} proved that the following important result using the fact that an even order symmetric double ${B}$-tensor (quasi-double ${B}$-tensor) can be decomposed into the sum of a doubly (quasi-doubly) strictly diagonally dominant symmetric ${Z}$-tensor and several positive multiples of partially all one tensors.

\begin{theorem}$\cite{LL}$\label{4l1} The following statements are true:
\begin{itemize}
\item[(a)] All the $H$-eigenvalues of an even order symmetric double ${B}$-tensor are positive,
\item[(b)] All the $H$-eigenvalues of an even order symmetric quasi-double ${B}$-tensor are positive.
\end{itemize}
\end{theorem}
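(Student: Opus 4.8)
The plan is to derive the statement from Qi's characterization of positive (semi)definiteness recalled above (\cite[Theorem~5]{Qi}): since $m$ is even, a symmetric tensor has all of its $H$-eigenvalues positive if and only if it is positive definite, so it suffices to prove that an even order symmetric double $B$-tensor (quasi-double $B$-tensor) is positive definite. The route to positive definiteness is a decomposition
\begin{equation*}
\mathcal{A}=\mathcal{M}+\sum_{k}h_{k}\,\mathcal{E}_{J_{k}},
\end{equation*}
where each $h_{k}>0$, each $J_{k}\subseteq[n]$, $\mathcal{E}_{J_{k}}$ is the partially all-one tensor associated with $J_{k}$ (its $(i_{1},\dots,i_{m})$ entry is $1$ when $i_{1},\dots,i_{m}\in J_{k}$ and $0$ otherwise), and $\mathcal{M}$ is a symmetric $Z$-tensor which is doubly strictly diagonally dominant in case (a) and quasi-doubly strictly diagonally dominant in case (b). Granting this, for any $x\in\mathbb{R}^{n}$ we have $\mathcal{E}_{J_{k}}x^{m}=\big(\sum_{i\in J_{k}}x_{i}\big)^{m}\ge 0$ because $m$ is even, hence $\mathcal{A}x^{m}\ge\mathcal{M}x^{m}$, and the whole problem reduces to showing that an even order symmetric DSDD (resp.\ Q-DSDD) $Z$-tensor is positive definite.

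To build the decomposition I would mimic Pe\~na's matrix construction at the level of row tensors. For each $i$ set $\beta_{i}=\beta_{i}(\mathcal{A})$ and note that replacing the $i$th row tensor $\mathcal{A}_{i}$ by $\mathcal{A}_{i}-\beta_{i}\mathcal{J}$, where $\mathcal{J}$ is the all-one tensor of order $m-1$, makes every off-diagonal entry of row $i$ nonpositive (by the very definition of $\beta_{i}(\mathcal{A})$) while leaving the diagonal entry equal to $a_{i\dots i}-\beta_{i}>0$. Carried out one row at a time this destroys symmetry, so the corrections $\beta_{i}\mathcal{J}$ must instead be reassembled into a nonnegative combination $\sum_{k}h_{k}\mathcal{E}_{J_{k}}$ of partially all-one tensors chosen so that the residual tensor $\mathcal{M}$ is symmetric; arranging this is the combinatorial core of the proof (it is the construction performed in \cite{LL}, and its matrix ancestor is classical). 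Once $\mathcal{M}$ is in hand one records the bookkeeping identities $m_{i\dots i}=a_{i\dots i}-\beta_{i}$, $r_{i}(\mathcal{M})=\Delta_{i}(\mathcal{A})$, $r_{j}^{i}(\mathcal{M})=\Delta_{j}^{i}(\mathcal{A})$ and $|m_{ji\dots i}|=\beta_{j}-a_{ji\dots i}$; substituting them, conditions (a)--(b) of Definition~\ref{doubleb} become exactly the DSDD inequalities for $\mathcal{M}$, and the defining inequality of a quasi-double $B$-tensor becomes exactly the Q-DSDD inequality for $\mathcal{M}$ (this is also where the bookkeeping must be done with the corrections recorded in Section~\ref{notations}).

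It then remains to show that an even order symmetric $Z$-tensor $\mathcal{M}$ which is DSDD or Q-DSDD is positive definite, and for this I would invoke the localization Theorems~\ref{sg} and~\ref{s0g}. Since $\mathcal{M}$ is symmetric of even order its $H$-eigenvalues are real; since $\mathcal{M}$ is a $Z$-tensor its off-diagonal entries are $\le 0$, so the strict diagonal dominance hypotheses force each diagonal entry $m_{i\dots i}$ to be positive, and in fact every $H$-eigenvalue to be positive: on $(-\infty,0]$ the quantities entering the Brauer set $\Omega_{ij}(\mathcal{M})$ are monotone thanks to the $Z$-sign pattern, so the strict double (quasi-double) dominance inequality, which says precisely that $0$ fails the inequality cutting out $\Omega_{ij}(\mathcal{M})$ for every $i\ne j$, keeps all of $\Omega(\mathcal{M})$ — hence, by Theorem~\ref{s0g}, all $H$-eigenvalues of $\mathcal{M}$ — inside $(0,\infty)$. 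By Qi's theorem $\mathcal{M}$ is positive definite; adding the nonnegative terms $h_{k}\mathcal{E}_{J_{k}}x^{m}$ keeps $\mathcal{A}x^{m}$ strictly positive for $x\ne 0$, so $\mathcal{A}$ is positive definite and all its $H$-eigenvalues are positive. I expect the genuinely delicate step to be the symmetric decomposition — both constructing it and checking that the residual $Z$-tensor inherits precisely the double / quasi-double strict diagonal dominance — whereas the passage from diagonal dominance of a $Z$-tensor to positive definiteness through the inclusion sets is routine.
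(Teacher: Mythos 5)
Your proposal is correct and follows essentially the same route as the paper: Theorem~\ref{4l1} is quoted from \cite{LL}, and the paper explicitly says its proof rests on decomposing an even order symmetric double (quasi-double) $B$-tensor into a doubly (quasi-doubly) strictly diagonally dominant symmetric $Z$-tensor plus positive multiples of partially all-one tensors, which is exactly the decomposition you build and then combine with Qi's characterization. One caution: DSDD/Q-DSDD alone does \emph{not} force positive diagonal entries (the paper's $2\times 2$ counterexample to Theorem~\ref{positive-li-li} shows this), so the positivity of $m_{i\dots i}$ needed in the last step must come from your bookkeeping identity $m_{i\dots i}=a_{i\dots i}-\beta_i(\mathcal{A})>0$ (i.e., from the definition of a (quasi-)double $B$-tensor), not from the dominance hypotheses as your wording suggests.
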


The following results are proved in \cite{LL}.  We observed that these results contain minor mistakes. We give a counter example to these results and prove the correct versions (Theorem \ref{correct-li-li}, Theorem \ref{modified-positive-li-li}).
\begin{theorem}\cite[Proposition~5]{LL} \label{result-li-li}
Let $\mathcal{A}$ be an  $m$-order $n$-dimensional $Z$-tensor. Then:
\begin{itemize}
\item[(a)] $\mathcal{A}$ is a double $B$-tensor if and only if $\mathcal{A}$ is a DSDD tensor.
\item[(b)] $\mathcal{A}$ is a quasi-double $B$-tensor if and only if $\mathcal{A}$ is a Q-DSDD tensor.
\end{itemize}
\end{theorem}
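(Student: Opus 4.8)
The plan is to exploit the very rigid structure of a $Z$-tensor and watch the auxiliary quantities $\beta_i,\Delta_i,\Delta_j^i$ collapse onto the row sums $r_i,r_j^i$ from $(\ref{m1})$ and Theorem \ref{s0g}. The first step is purely structural: if $\mathcal{A}=s\mathcal{I}-\mathcal{D}$ with $\mathcal{D}\geq 0$, then every off-diagonal entry, i.e. every $a_{ii_2\dots i_m}$ with $(i_2,\dots,i_m)\neq(i,\dots,i)$, equals $-d_{ii_2\dots i_m}\leq 0$; hence $(\ref{88})$ gives $\beta_i(\mathcal{A})=0$ for all $i$, and the standing hypothesis $a_{i\dots i}>\beta_i(\mathcal{A})$ of Definition \ref{doubleb} just reads $a_{i\dots i}>0$, so $|a_{i\dots i}|=a_{i\dots i}$. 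Substituting $\beta_i=0$ into $(\ref{77})$ and $(\ref{66})$ turns those telescoped sums into sums of absolute values, so that $\Delta_i(\mathcal{A})=\sum_{(i_2,\dots,i_m)\neq(i,\dots,i)}|a_{ii_2\dots i_m}|=r_i(\mathcal{A})$ and likewise $\Delta_j^i(\mathcal{A})=r_j^i(\mathcal{A})$, while $\beta_i(\mathcal{A})-a_{ji\dots i}=|a_{ji\dots i}|$.

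With these identities, part (a) is essentially a rewriting. Condition (a) of Definition \ref{doubleb} becomes $a_{i\dots i}\geq r_i(\mathcal{A})$, that is $|a_{i\dots i}|\geq r_i(\mathcal{A})$, which is condition (b) of the DSDD definition; condition (b) becomes $a_{i\dots i}a_{j\dots j}>r_i(\mathcal{A})r_j(\mathcal{A})$, that is $|a_{i\dots i}||a_{j\dots j}|>r_i(\mathcal{A})r_j(\mathcal{A})$, which is condition (a) of the DSDD definition. For $m>2$ the two families of hypotheses then coincide and the equivalence is immediate. I would, however, pay explicit attention to the edge case $m=2$, where the DSDD definition drops the clause $|a_{i\dots i}|\geq r_i(\mathcal{A})$ but the double $B$-tensor definition keeps it, so that the implication from double $B$-tensor to DSDD is automatic while the converse can fail; a boundary discrepancy of exactly this kind is what I expect the corrected Theorem \ref{correct-li-li} to address.

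For part (b) the same substitution turns the quasi-double $B$-tensor inequality into $|a_{i\dots i}|\bigl(|a_{j\dots j}|-r_j^i(\mathcal{A})\bigr)>|a_{ji\dots i}|\,\Delta_j(\mathcal{A})$, and here is the point I expect to be the main obstacle. Its left-hand side matches the left-hand side of the Q-DSDD condition perfectly, but the multiplier $\Delta_j(\mathcal{A})$ reduces to $r_j(\mathcal{A})$, whereas the right-hand side of the Q-DSDD definition (and of the Brauer set $\Omega_{ij}(\mathcal{A})$) carries $r_i(\mathcal{A})$. Deciding how to reconcile these --- whether the off-diagonal factor in the quasi-double $B$-tensor condition should really be built from $\Delta_i$ rather than $\Delta_j$, or whether the equivalence must instead be stated against the requirement ``$|a_{i\dots i}|\bigl(|a_{j\dots j}|-r_j^i(\mathcal{A})\bigr)>r_j(\mathcal{A})|a_{ji\dots i}|$ for all $i\neq j$'' --- is the delicate step, and I expect it to be exactly where the minor error in \cite{LL} sits, to be put right by Theorems \ref{correct-li-li} and \ref{modified-positive-li-li}. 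Once the two sides are aligned, each implication of the ``if and only if'' is once more a one-line rewriting of the displayed inequality.
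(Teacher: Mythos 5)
The statement you set out to prove is quoted by the paper from \cite{LL} precisely in order to refute it: immediately after stating Theorem \ref{result-li-li}, the paper exhibits the $2\times 2$ $Z$-matrix with diagonal entries $-1$ and off-diagonal entries $-\tfrac12$, which is both DSDD and Q-DSDD but is neither a double $B$-matrix nor a quasi-double $B$-matrix, and then replaces the claim by the corrected Theorem \ref{correct-li-li}, which assumes $\overline{\mathcal{A}}$ (rather than $\mathcal{A}$) is a $Z$-tensor. So there is no proof of this statement in the paper to compare against; the relevant question is where your argument breaks, and it breaks at its very first step. You write that the standing hypothesis $a_{i\dots i}>\beta_i(\mathcal{A})$ of Definition \ref{doubleb} ``just reads $a_{i\dots i}>0$, so $|a_{i\dots i}|=a_{i\dots i}$.'' That is available only in the direction ``double $B$ $\Rightarrow$ DSDD,'' where positivity of the diagonal is part of the hypothesis. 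In the converse direction the DSDD and Q-DSDD conditions are stated entirely in terms of $|a_{i\dots i}|$ and impose nothing on the sign of the diagonal, whereas for a $Z$-tensor $\beta_i(\mathcal{A})=0$, so membership in the (quasi-)double $B$ class genuinely requires $a_{i\dots i}>0$. A $Z$-tensor with negative diagonal entries and small off-diagonal rows satisfies every DSDD/Q-DSDD inequality yet fails $a_{i\dots i}>\beta_i(\mathcal{A})$ at the outset --- exactly the paper's counterexample. Your reductions $\beta_i(\mathcal{A})=0$, $\Delta_i(\mathcal{A})=r_i(\mathcal{A})$, $\Delta_j^i(\mathcal{A})=r_j^i(\mathcal{A})$ are correct and are essentially the computation the paper performs when proving Theorem \ref{correct-li-li}; but that theorem must first pass to $\overline{\mathcal{A}}$, multiplying each row by $sign(a_{k\dots k})$, precisely to restore positive diagonal entries before your bookkeeping applies.

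The two difficulties you flagged as candidates for ``where the error sits'' are not the operative ones. The discrepancy between $\Delta_j(\mathcal{A})$ in the quasi-double $B$ definition and $r_i(\mathcal{A})$ in the Q-DSDD definition is a genuine inconsistency in the paper's transcription of the definition (the proof of Theorem \ref{correct-li-li}(b) uses the form $(\beta_j(\mathcal{B})-b_{ji\dots i})\Delta_i(\mathcal{B})$, which is the one that matches Q-DSDD), i.e.\ an index typo rather than the mathematical failure; and the $m=2$ clause is not what the counterexample exploits, since that matrix satisfies $|a_{i\dots i}|\geq r_i(\mathcal{A})$ anyway. The substantive gap is the sign of the diagonal.
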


\begin{theorem}\cite[Theorem 4]{LL}\label{positive-li-li}
Let $\mathcal{A}$ be an  $m$-order $n$-dimensional real symmetric tensor such that $m$ is even.
If $\mathcal{A}$  is either a DSDD tensor or a Q-DSDD tensor, then $\mathcal{A}$ is positive definite.
\end{theorem}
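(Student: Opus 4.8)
We outline how we would establish this statement. A preliminary remark is in order: as written the assertion is not quite correct. The defining inequalities of both the DSDD and the Q-DSDD conditions involve only the moduli $|a_{i\dots i}|$, $|a_{ji\dots i}|$ and the absolute row sums $r_i(\mathcal{A})$, so $-\mathcal{A}$ is DSDD (resp. Q-DSDD) precisely when $\mathcal{A}$ is. Hence a tensor all of whose diagonal entries are negative can be DSDD or Q-DSDD while being \emph{negative} definite. This is the ``minor mistake'', and the plan is to prove the corrected assertion (to be recorded as Theorem~\ref{modified-positive-li-li}): if $\mathcal{A}$ is a symmetric tensor of even order $m$ that is DSDD or Q-DSDD and satisfies $a_{i\dots i}>0$ for all $i$, then $\mathcal{A}$ is positive definite. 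Note that for $n\ge 2$ the DSDD and Q-DSDD conditions already force $a_{i\dots i}\neq 0$ for every $i$, so the extra hypothesis is merely a choice of sign; the case $n=1$ is trivial.

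By Qi's characterization of positive (semi)definiteness via $H$-eigenvalues (\cite[Theorem~5]{Qi}, quoted above; applicable since $m$ is even), $\mathcal{A}$ is positive definite if and only if it is positive semidefinite and has no zero $H$-eigenvalue. So the first step is a \emph{nonsingularity} lemma: a DSDD or Q-DSDD tensor has no zero $H$-eigenvalue. For a Q-DSDD tensor this is immediate from the Brauer-type localization: if $0$ were an $H$-eigenvalue, Theorem~\ref{s0g} would give $|a_{i\dots i}|(|a_{j\dots j}|-r_j^i(\mathcal{A}))\le r_i(\mathcal{A})|a_{ji\dots i}|$ for some pair $i\neq j$, contradicting the defining strict inequality. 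For a DSDD tensor one argues directly: if $\mathcal{A}x^{m-1}=0$ with $x\neq 0$, choose $p$ with $|x_p|=\max_k|x_k|$ and, if some other component is nonzero, $q\neq p$ with $|x_q|=\max_{k\neq p}|x_k|>0$; bounding the $p$-th and $q$-th coordinate equations yields $|a_{p\dots p}|\,|x_p|\le r_p(\mathcal{A})|x_q|$ and $|a_{q\dots q}|\,|x_q|\le r_q(\mathcal{A})|x_p|$, and the product of these contradicts condition~(a); the remaining possibility $x=x_pe_p$ forces $a_{p\dots p}=0$, again impossible by~(a) when $n\ge 2$.

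The second step is a homotopy. Put $\mathcal{A}(t)=(1-t)\mathcal{I}+t\mathcal{A}$ for $t\in[0,1]$, so $\mathcal{A}(0)=\mathcal{I}$ and $\mathcal{A}(1)=\mathcal{A}$; each $\mathcal{A}(t)$ is symmetric of even order $m$, its diagonal entries are $(1-t)+ta_{i\dots i}>0$, and the quantities $r_i$, $r_j^i$, $|a_{ji\dots i}|$ all scale by the factor $t$. Expanding the relevant products and using that the cross terms are nonnegative — because all diagonal entries of $\mathcal{A}$ are positive and, in the Q-DSDD case, $a_{j\dots j}>r_j^i(\mathcal{A})\ge 0$ is forced by the defining inequality — one checks in a line that condition~(a) (resp. the Q-DSDD inequality) for $\mathcal{A}(t)$ follows from the corresponding one for $\mathcal{A}$, for every $t\in[0,1]$; likewise condition~(b) is preserved. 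Thus $\mathcal{A}(t)$ is DSDD (resp. Q-DSDD) throughout, and by the first step it has no zero $H$-eigenvalue for any $t$.

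Finally we combine these. Writing $\lambda_{\min}(\mathcal{B})=\min\{\mathcal{B}x^m:\sum_i x_i^m=1\}$, which is continuous in $\mathcal{B}$ (being the minimum of a polynomial over a fixed compact set) and coincides with the least $H$-eigenvalue of $\mathcal{B}$ (a Lagrange-multiplier computation shows the minimizer is an $H$-eigenvector with eigenvalue equal to the minimum value), the map $t\mapsto\lambda_{\min}(\mathcal{A}(t))$ is continuous on $[0,1]$, equals $1$ at $t=0$, and never vanishes; hence it is positive throughout, so $\lambda_{\min}(\mathcal{A})>0$ and $\mathcal{A}$ is positive definite. (Equivalently: the set of $t$ for which $\mathcal{A}(t)$ is positive definite is open, is closed because positive semidefiniteness passes to limits while a positive semidefinite nonsingular tensor is positive definite, and is nonempty, hence equals $[0,1]$.) The two points needing care are precisely the sign hypothesis flagged at the outset and this last continuity/connectedness step: for tensors the full set of $H$-eigenvalues need not vary as a single curve, so the argument must be routed through the single continuous function $\lambda_{\min}$ (or through the closedness of positive semidefiniteness) rather than by tracking eigenvalues one by one.
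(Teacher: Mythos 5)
You are right that the statement as printed is false, and for exactly the reason you give: the DSDD and Q-DSDD conditions depend only on $|a_{i\dots i}|$, $|a_{ji\dots i}|$ and $r_i(\mathcal{A})$, hence are invariant under $\mathcal{A}\mapsto-\mathcal{A}$ and cannot distinguish positive definite from negative definite tensors. The paper records this via the $2\times 2$ counterexample with diagonal $-1$ and restates the result with the added hypothesis $a_{i\dots i}>0$ as Theorem~\ref{modified-positive-li-li}, whose proof there is only a citation of two external theorems; so your attempt to make the corrected statement self-contained is a genuinely different (and more informative) route. Your Q-DSDD nonsingularity step via Theorem~\ref{s0g}, the homotopy computation along $(1-t)\mathcal{I}+t\mathcal{A}$, and the final continuity argument through $\lambda_{\min}$ are all sound.

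The gap is in the nonsingularity lemma for DSDD tensors when $m>2$. From the $q$-th coordinate equation you claim $|a_{q\dots q}|\,|x_q|\le r_q(\mathcal{A})|x_p|$, but the multi-index $(p,\dots,p)$ contributes a term bounded only by $|a_{qp\dots p}|\,|x_p|^{m-1}$, so the honest estimate is $|a_{q\dots q}|\,|x_q|^{m-1}\le r_q(\mathcal{A})|x_p|^{m-1}$, which after dividing by $|x_q|^{m-2}$ gives $|a_{q\dots q}|\,|x_q|\le r_q(\mathcal{A})|x_p|\,(|x_p|/|x_q|)^{m-2}$ --- weaker than what you wrote. Multiplying with the (correct) $p$-th inequality then yields only $|a_{p\dots p}|\,|a_{q\dots q}|\le r_p(\mathcal{A})r_q(\mathcal{A})(|x_p|/|x_q|)^{m-2}$, which does not contradict condition (a) unless $|x_p|=|x_q|$; this is precisely why condition (b) is built into the definition of DSDD for $m>2$, and why the tensor analogue of Brauer's ovals of Cassini is not the naive one. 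The repair: the $p$-th equation alone gives $|a_{p\dots p}|\le r_p(\mathcal{A})\,|x_q|/|x_p|\le r_p(\mathcal{A})$, which combined with condition (b) forces $r_p(\mathcal{A})>0$, $|a_{p\dots p}|=r_p(\mathcal{A})$ and $|x_q|=|x_p|$; the $q$-th equation then gives $|a_{q\dots q}|\le r_q(\mathcal{A})$, and the product of these two now contradicts condition (a). With that correction, and no other changes, your argument goes through.
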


 The following example shows that Theorem \ref{result-li-li} and \ref{positive-li-li} are not true. Consider the $Z$-matrix $A = \left(
\begin{array}{ccc}
-1  & -\frac{1}{2} \\
-\frac{1}{2}  & -1\end{array} \right)$. Then $A$ is both DSDD and  Q-DSDD-matrix. But, it is neither a double $B$-matrix nor a quasi-double $B$-matrix. Also it is easy to see that $A$ is not positive definite.

We will give a correct version of Theorem \ref{result-li-li} in Section \ref{dt-qdt}. Now we give a correct version of Theorem \ref{positive-li-li} as the follows.

\begin{theorem}\label{modified-positive-li-li}
Let $\mathcal{A}$ be an  $m$-order $n$-dimensional real symmetric tensor such that $m$ is even and $a_{i\dots i} > 0$ for all $i \in [n]$.
\begin{itemize}
\item[(1)] If $\mathcal{A}$  is a DSDD tensor, then $\mathcal{A}$ is positive definite;
\par
\item[(2)] If $\mathcal{A}$ is  a Q-DSDD tensor, then $\mathcal{A}$ is positive definite;
\end{itemize}
\end{theorem}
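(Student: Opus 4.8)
The plan is to reduce to the positivity of $H$-eigenvalues and then argue by contradiction. Since $\mathcal{A}$ is symmetric of even order, \cite[Theorem~5]{Qi} tells us that $\mathcal{A}$ is positive definite as soon as all of its $H$-eigenvalues are positive, so it suffices to prove that. If $n=1$ the only $H$-eigenvalue is $a_{1\dots 1}>0$, so assume $n\ge 2$. Let $\lambda$ be an $H$-eigenvalue with real eigenvector $x\neq 0$, $\mathcal{A}x^{m-1}=\lambda x^{[m-1]}$, and suppose towards a contradiction that $\lambda\le 0$; then $|\lambda-a_{i\dots i}|=a_{i\dots i}-\lambda\ge a_{i\dots i}>0$ for every $i\in[n]$.

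For part~(2), I would use the Brauer-type inclusion set of Theorem~\ref{s0g} directly. Since $\mathcal{A}$ is Q-DSDD and has positive diagonal, for every pair $i\neq j$ we have $a_{i\dots i}\bigl(a_{j\dots j}-r_j^i(\mathcal{A})\bigr)>r_i(\mathcal{A})|a_{ji\dots i}|\ge 0$, whence $a_{j\dots j}-r_j^i(\mathcal{A})>0$. Combining this with $\lambda\le 0$,
\[
|\lambda-a_{i\dots i}|\bigl(|\lambda-a_{j\dots j}|-r_j^i(\mathcal{A})\bigr)\ \ge\ a_{i\dots i}\bigl(a_{j\dots j}-r_j^i(\mathcal{A})\bigr)\ >\ r_i(\mathcal{A})|a_{ji\dots i}|,
\]
so $\lambda\notin\Omega_{ij}(\mathcal{A})$ for every $i\neq j$, i.e.\ $\lambda\notin\Omega(\mathcal{A})$, contradicting Theorem~\ref{s0g}. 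Hence $\lambda>0$.

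For part~(1), a DSDD tensor with positive diagonal need not be Q-DSDD, so I would instead imitate the proof of a Brauer/Cassini-oval bound at the level of the eigenvector. Pick $p$ with $|x_p|=\max_i|x_i|>0$ and $q\neq p$ with $|x_q|=\max_{i\neq p}|x_i|$. Reading off the $p$-th component of $\mathcal{A}x^{m-1}=\lambda x^{[m-1]}$ and bounding each off-diagonal monomial $a_{pi_2\dots i_m}x_{i_2}\cdots x_{i_m}$ (which has a factor of absolute value $\le|x_q|$ and the remaining factors $\le|x_p|$) gives $(a_{p\dots p}-\lambda)|x_p|\le r_p(\mathcal{A})|x_q|$; reading off the $q$-th component and bounding all factors by $|x_p|$ gives $(a_{q\dots q}-\lambda)|x_q|^{m-1}\le r_q(\mathcal{A})|x_p|^{m-1}$. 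The degenerate cases $x_q=0$ and $r_p(\mathcal{A})r_q(\mathcal{A})=0$ make one of these two inequalities immediately contradictory (its left side is strictly positive while its right side vanishes), so assume $x_q\neq 0$ and $r_p(\mathcal{A}),r_q(\mathcal{A})>0$. Raising the first inequality to the power $m-1$, multiplying by the second, and cancelling $|x_p|^{m-1}|x_q|^{m-1}$ gives
\[
(a_{p\dots p}-\lambda)^{m-1}(a_{q\dots q}-\lambda)\ \le\ r_p(\mathcal{A})^{m-1}\,r_q(\mathcal{A}),
\]
and since $\lambda\le 0$ and the diagonal entries are positive, $a_{p\dots p}^{m-1}a_{q\dots q}\le r_p(\mathcal{A})^{m-1}r_q(\mathcal{A})$. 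For $m=2$ this reads $a_{p\dots p}a_{q\dots q}\le r_p(\mathcal{A})r_q(\mathcal{A})$, which already contradicts condition~(a) of the DSDD definition. For $m>2$, condition~(b) gives $a_{p\dots p}\ge r_p(\mathcal{A})$ and $a_{q\dots q}\ge r_q(\mathcal{A})$, which together with the displayed inequality and $r_p(\mathcal{A}),r_q(\mathcal{A})>0$ forces $a_{p\dots p}=r_p(\mathcal{A})$ and $a_{q\dots q}=r_q(\mathcal{A})$, again contradicting condition~(a). In every case we reach a contradiction, so $\lambda>0$.

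I expect part~(1) to be the main obstacle. It cannot be reduced to part~(2): the implication ``DSDD with positive diagonal $\Rightarrow$ Q-DSDD'' fails precisely when $a_{j\dots j}=r_j(\mathcal{A})$ and $a_{ji\dots i}=0$ for some pair, and in that situation $0$ can actually lie in $\Omega(\mathcal{A})$, so the clean inclusion-set argument of part~(2) is unavailable and one must return to the eigenvector. There the estimate on the $q$-th equation does not symmetrize as in the matrix case (the exponents of $|x_p|$ and $|x_q|$ differ by $m-2$), which is why the argument produces the asymmetric inequality above and requires the case split together with condition~(b) to be closed.
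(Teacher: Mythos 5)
Your proof is correct, but it is a genuinely different argument from the paper's. The paper disposes of both parts in two lines by citing eigenvalue-localization results from an external reference (Theorems 11 and 13 of Li--Wang--Zhao--Zhu--Li), adding only the observation that a Q-DSDD tensor has at least one row with $|a_{i\dots i}|>r_i(\mathcal{A})$. You instead give a self-contained proof: part (2) follows cleanly from the Brauer-type inclusion set of Theorem \ref{s0g} already stated in the paper (your factor-by-factor comparison showing $\lambda\le 0$ cannot lie in any $\Omega_{ij}(\mathcal{A})$ is sound, since Q-DSDD with positive diagonal forces $a_{j\dots j}-r_j^i(\mathcal{A})>0$), and part (1) is handled by a direct two-component eigenvector estimate in the spirit of Brauer's ovals. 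Your handling of part (1) is the more delicate point and it is done correctly: the inequalities $(a_{p\dots p}-\lambda)|x_p|\le r_p(\mathcal{A})|x_q|$ and $(a_{q\dots q}-\lambda)|x_q|^{m-1}\le r_q(\mathcal{A})|x_p|^{m-1}$ do not symmetrize for $m>2$, and your use of condition (b) of the DSDD definition to force the equalities $a_{p\dots p}=r_p(\mathcal{A})$, $a_{q\dots q}=r_q(\mathcal{A})$ and then contradict condition (a) closes the gap; the degenerate cases ($n=1$, $x_q=0$, $r_p(\mathcal{A})r_q(\mathcal{A})=0$) are all accounted for. What your route buys is independence from the external reference and an explicit explanation of why (1) cannot be reduced to (2); what the paper's route buys is brevity, at the cost of leaving the actual eigenvalue-localization work invisible to the reader.
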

Proof. The conclusion (1) follows directly from \cite[Theorem 11]{li-wang-zhao-zhu-li}. If $\mathcal{A}$ is  a Q-DSDD tensor, then it is easy to see that there exist an index $i\in [n]$ such that $|a_{i\dots i}| > r_i(\mathcal{A})$. Now the conclusion (2) follows directly from \cite[Theorem 13]{li-wang-zhao-zhu-li}.

\section{Double $\overline{B}$-tensors and quasi-double $\overline{B}$-tensors}\label{dt-qdt}

In this section, first we introduce the notion of  double $\overline{B}$-tensor and quasi-double $\overline{B}$-tensor.
\begin{defn}\label{dd5}
{\rm
Let $\mathcal{A}=(a_{i_1\dots i_m})\in  \mathcal{T}(\mathbb{R}^n,m)$ be a tensor with the $k$th row tensor $\mathcal{A}_k$, $k\in[n]$ and $\mathcal{\overline{A}}=(\overline{a}_{i_1\dots i_m})\in  \mathcal{T}(\mathbb{R}^n,m)$ be a tensor with the $k$th row tensor sign$(a_{k\dots k})$$\mathcal{A}_k$, $k\in[n]$. Then,
\begin{itemize}
\item[(a)]  $\mathcal{A}$ is called a double $\overline{B}$-tensor if $\mathcal{\overline{A}}$ is a double $B$-tensor,
\item[(b)] $\mathcal{A}$ is called a quasi-double ${\overline{B}}$-tensor if $\mathcal{\overline{A}}$ is a quasi-double ${B}$-tensor.
\end{itemize}
}
\end{defn}

Note that,  double $\overline{B}$-tensors discussed in this paper is different from double $\overline{B}$-tensors discussed in \cite{CYY} because of the different definitions of double $B$-tensor.

Let $\mathcal{A}$ be a double ${{B}}$-tensor. Since $a_{i\dots i}>\beta_i(\mathcal{A})\geq0$ for each $i\in[n]$, it is easy to see that $\mathcal{\overline{A}}$ is a double ${B}$-tensor, which means $\mathcal{A}$ is a double ${\overline{B}}$-tensor. Hence,
\begin{equation*}
\{\text{double} \ {{B}}\text{-tensors}\}\subset\{\text{double} \ {\overline{B}}\text{-tensors}\}.
\end{equation*}
Similarly, we have
\begin{equation*}
\{\text{quasi-double} \ {{B}}\text{-tensors}\}\subset\{\text{quasi-double} \ {\overline{B}}\text{-tensors}\}.
\end{equation*}

Define $$\delta_{i_1\dots i_m}=\left\{
                  \begin{array}{ll}
                    1, & \text{if} \ \ i_1=\dots=i_m, \\
                    0, & \text{otherwise}.
                  \end{array}
                \right.
$$

Next we prove a correct version of  Theorem \ref{result-li-li}.

\begin{theorem}\label{correct-li-li}
Let $\mathcal{A}$ be an  $m$-order $n$-dimensional tensor such that $\overline{\mathcal{A}}$ is a $Z$-tensor. Then
\begin{itemize}
\item[(a)] $\mathcal{A}$ is a double $\overline{B}$-tensor if and only if $\mathcal{A}$ is a DSDD tensor.
\item[(b)] $\mathcal{A}$ is a quasi-double $\overline{B}$-tensor if and only if $\mathcal{A}$ is a Q-DSDD tensor.
\end{itemize}
\end{theorem}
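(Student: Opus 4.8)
The plan is to use the hypothesis that $\overline{\mathcal A}$ is a $Z$-tensor to reduce every quantity attached to $\overline{\mathcal A}$ that enters the definition of a double $B$-tensor or of a quasi-double $B$-tensor to the corresponding quantity attached to $\mathcal A$ that enters the definition of a DSDD or a Q-DSDD tensor. The only facts needed are the identities $\beta_i(\overline{\mathcal A})=0$, $\overline a_{i\dots i}=|a_{i\dots i}|$, $\Delta_i(\overline{\mathcal A})=r_i(\mathcal A)$, $\Delta_j^i(\overline{\mathcal A})=r_j^i(\mathcal A)$, and $\beta_i(\overline{\mathcal A})-\overline a_{ji\dots i}=|a_{ji\dots i}|$ for $i\neq j$; once these are in place, both equivalences follow by substituting them into the defining inequalities and reading off the result, and since each substitution is an equality the argument runs in both directions.

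To establish the identities I would first dispose of the degenerate case. If $a_{i\dots i}=0$ for some $i$, then the $i$th row tensor $\mathrm{sign}(a_{i\dots i})\,\mathcal A_i$ of $\overline{\mathcal A}$ is the zero tensor, so $\overline a_{i\dots i}=\beta_i(\overline{\mathcal A})=0$, which violates the standing requirement $\overline a_{i\dots i}>\beta_i(\overline{\mathcal A})$ common to Definition \ref{doubleb} and to the definition of a quasi-double $B$-tensor; hence $\mathcal A$ is neither a double $\overline B$-tensor nor a quasi-double $\overline B$-tensor. On the other hand, for $n\ge 2$ the inequalities $|a_{i\dots i}||a_{j\dots j}|>r_i(\mathcal A)r_j(\mathcal A)$ and $|a_{i\dots i}|(|a_{j\dots j}|-r_j^i(\mathcal A))>r_i(\mathcal A)|a_{ji\dots i}|$ then fail, their left sides vanishing while their right sides are nonnegative, so $\mathcal A$ is neither DSDD nor Q-DSDD. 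Thus both sides of (a) and of (b) are false and nothing needs proof; from now on we assume $a_{i\dots i}\neq 0$ for every $i$, so that $\mathrm{sign}(a_{i\dots i})^2=1$ and $|\overline a_{ii_2\dots i_m}|=|a_{ii_2\dots i_m}|$ for every multi-index. Writing $\overline{\mathcal A}=s\mathcal I-\mathcal D$ with $\mathcal D\ge 0$, every non-diagonal entry of $\overline{\mathcal A}$ is $\le 0$; this gives $\beta_i(\overline{\mathcal A})=0$ and $\overline a_{i\dots i}=\mathrm{sign}(a_{i\dots i})\,a_{i\dots i}=|a_{i\dots i}|$, and then plugging $\beta_i(\overline{\mathcal A})=0$ into (\ref{77}) and (\ref{66}) together with $-\overline a_{ii_2\dots i_m}=|\overline a_{ii_2\dots i_m}|=|a_{ii_2\dots i_m}|$ for non-diagonal positions yields the remaining identities; the last one uses that $(j,i,\dots,i)$ is a non-diagonal position of $\overline{\mathcal A}$ when $i\neq j$.

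With the identities in hand, part (a) is a direct substitution. By definition $\mathcal A$ is a double $\overline B$-tensor iff $\overline{\mathcal A}$ satisfies $\overline a_{i\dots i}>\beta_i(\overline{\mathcal A})$ together with conditions (a) and (b) of Definition \ref{doubleb}; under our running assumption the first condition reads $|a_{i\dots i}|>0$ and holds, condition (a) reads $\overline a_{i\dots i}-\beta_i(\overline{\mathcal A})\ge\Delta_i(\overline{\mathcal A})$, i.e.\ $|a_{i\dots i}|\ge r_i(\mathcal A)$ for all $i$, and condition (b) reads $|a_{i\dots i}||a_{j\dots j}|>r_i(\mathcal A)r_j(\mathcal A)$ for all $i\neq j$; for $m>2$ these are exactly the two clauses defining a DSDD tensor, so $\mathcal A$ is a double $\overline B$-tensor iff $\mathcal A$ is DSDD. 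Part (b) is the same computation carried out on the single inequality defining a quasi-double $B$-tensor applied to $\overline{\mathcal A}$: after substituting the identities it reads $|a_{i\dots i}|(|a_{j\dots j}|-r_j^i(\mathcal A))>r_i(\mathcal A)|a_{ji\dots i}|$ for all $i\neq j$, which is precisely the defining inequality of a Q-DSDD tensor.

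I do not expect a genuine obstacle here; the content of the proof is the sign analysis that forces $\beta_i(\overline{\mathcal A})=0$ and converts each difference $\beta_i(\overline{\mathcal A})-\overline a_{ii_2\dots i_m}$ appearing in (\ref{88})--(\ref{66}) into the absolute value $|a_{ii_2\dots i_m}|$ entering the DSDD and Q-DSDD conditions. This is precisely where the hypothesis that $\overline{\mathcal A}$---rather than $\mathcal A$---is a $Z$-tensor is used, and it is exactly what repairs the gap in Theorem \ref{result-li-li}, whose counterexample has a negative diagonal entry, so that $a_{i\dots i}>\beta_i(\mathcal A)$ fails at once. The only point deserving a word of care is the clause ``when $m>2$'' in the definition of a DSDD tensor: for $m>2$ the translation of condition (a) of Definition \ref{doubleb} coincides with the row-dominance clause $|a_{i\dots i}|\ge r_i(\mathcal A)$, so nothing is lost; in the matrix case $m=2$ the statement should be read with the customary convention that a DSDD matrix also satisfies $|a_{ii}|\ge r_i$, after which the argument is identical.
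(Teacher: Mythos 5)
Your proposal is correct and follows essentially the same route as the paper's own proof: reduce to $\beta_i(\overline{\mathcal{A}})=0$, $\Delta_i(\overline{\mathcal{A}})=r_i(\mathcal{A})$, $\Delta_j^i(\overline{\mathcal{A}})=r_j^i(\mathcal{A})$ via the $Z$-tensor hypothesis and substitute into the defining inequalities. Your treatment is somewhat more careful than the paper's (explicitly disposing of the case $a_{i\dots i}=0$ and flagging the $m=2$ clause), but the argument is the same.
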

\begin{proof}\textbf{(a)}
Let $\mathcal{B} = \overline{\mathcal{A}}$.
Since $\mathcal{B}$ is a  $Z$-tensor, we have $\beta_i(\mathcal{B}) = 0 $  and  $r_i(\mathcal{B}) = \Delta_i(\mathcal{B})= \sum\limits_{\delta_{ii_2\dots i_m}= 0} -b_{ii_2\dots i_m} $ for all $i$. Hence, $$|b_{i\dots i}||b_{j \dots j}| > r_i(\mathcal{B}) r_j(\mathcal{B})$$ if and only if $$(b_{i\dots i } - \beta_i(\mathcal{B}))(b_{j\dots j } - \beta_j(\mathcal{B})) > \Delta_i(\mathcal{B})\Delta_j(\mathcal{B}).$$ Also $b_{i\dots i} - \beta_i(\mathcal{B}) \geq \Delta_i(\mathcal{B})$ if and only if $|a_{i\dots i}| \geq r_i(\mathcal{A}).$  Thus $\mathcal{A}$ is a double $\overline{B}$-tensor if and only if $\mathcal{A}$ is a DSDD tensor.

\textbf{(b)}Now, $\Delta_j^i(\mathcal{B}) = \sum\limits_{\substack{\delta_{ij_2\dots j_m} = 0 \\ \delta_{jj_2\dots j_m} = 0}}(\beta_{j}(\mathcal{B})-b_{jj_2\dots j_m}) = r_j^i(\mathcal{B}).$ Hence $$b_{i\dots i} (b_{j\dots j} - r_j^i(\mathcal{B})) > r_i(\mathcal{B})|b_{ji\dots i}|$$ if and only if $$(b_{i\dots i} - \beta_{i}(\mathcal{B}))(b_{j\dots j} - \beta_j(\mathcal{B})- \Delta_j^i(\mathcal{B}))>(\beta_j(\mathcal{B}) - b_{ji\dots i})\Delta_i(\mathcal{B}).$$ Thus $\mathcal{A}$ is a quasi double $\overline{B}$-tensor if and only if $\mathcal{A}$ is a Q-DSDD tensor.
\end{proof}

Let us recall the general product of two $n$-dimensional tensors defined by Shao in $\cite{Shao}$.
\begin{defn}$\cite{Shao}$
Let $\mathcal{A}=(a_{i_1\dots i_m})\in\mathcal{T}(\mathbb{C}^n,m)$ and $\mathcal{B}=(b_{i_1\dots i_k})\in\mathcal{T}(\mathbb{C}^n,k)$.  Define the product $\mathcal{A}\mathcal{B}$ to be the following tensor $\mathcal{C}$ of
order $(m-1)(k-1)+1$ and dimension $n$:
\begin{equation*}
c_{i\alpha_1\dots \alpha_{m-1}}=\sum^n_{i_2,\dots ,i_m=1}a_{ii_2\dots i_m}b_{i_2\alpha_1}\cdot\cdot\cdot b_{i_m\alpha_{m-1}}, \  \ (i\in[n], \alpha_1,\dots ,\alpha_{m-1}\in[n]^{k-1}).
\end{equation*}
\end{defn}

By the definition of double $\overline{B}$-tensor and quasi-double ${\overline{B}}$-tensor, it is easy to prove the following.

\begin{theorem}\label{ld6.3}
Let $\mathcal{A}=(a_{i_1\dots i_m})\in \mathcal{T}(\mathbb{R}^n,m)$. Then
\begin{itemize}
\item[(a)] $\mathcal{A}$ is a double $\overline{B}$-tensor if and only if there exists an $n\times n$ diagonal matrix $D$ whose diagonal elements belong to the set $\{1,-1\}$ and
a double ${B}$-tensor $\mathcal{B}\in \mathcal{T}(\mathbb{R}^n,m)$ such that $\mathcal{A}=D\mathcal{B}$,
\item[(b)] $\mathcal{A}$ is a quasi-double ${\overline{B}}$-tensor if and only if there exists an $n\times n$ diagonal matrix $D$ whose diagonal elements belong to the set $\{1,-1\}$ and
a quasi-double ${B}$-tensor $\mathcal{B}\in \mathcal{T}(\mathbb{R}^n,m)$ such that $\mathcal{A}=D\mathcal{B}$.
\end{itemize}

\end{theorem}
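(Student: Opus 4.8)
The plan is to unwind the definitions and identify the matrix $D$ explicitly as the sign matrix attached to the diagonal entries of $\mathcal{A}$. Recall that for a tensor $\mathcal{A}$, the tensor $\overline{\mathcal{A}}$ is defined row-wise by taking the $k$th row tensor to be $\mathrm{sign}(a_{k\dots k})\mathcal{A}_k$; in terms of entries this says $\overline{a}_{ki_1\dots i_{m-1}} = \mathrm{sign}(a_{k\dots k})\, a_{ki_1\dots i_{m-1}}$ for every $k$ and every choice of the remaining indices. In particular $\overline{a}_{k\dots k} = |a_{k\dots k}|$, so the definition of double $\overline{B}$-tensor already forces $a_{k\dots k}\neq 0$ for all $k$ (otherwise $\overline{a}_{k\dots k} = 0 = \beta_k(\overline{\mathcal{A}})$, contradicting the strict inequality required of a double $B$-tensor). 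Set $D = \mathrm{diag}(d_1,\dots,d_n)$ with $d_k = \mathrm{sign}(a_{k\dots k}) \in \{1,-1\}$.

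For the forward direction of (a): assume $\mathcal{A}$ is a double $\overline{B}$-tensor, so $\mathcal{B} := \overline{\mathcal{A}}$ is a double $B$-tensor. I would verify $\mathcal{A} = D\mathcal{B}$ by computing the entries of the product $D\mathcal{B}$ via Shao's definition. Treating $D$ as a matrix (order $2$) and $\mathcal{B}$ as order $m$, the product $D\mathcal{B}$ has order $(2-1)(m-1)+1 = m$, and its $(i, \alpha_1,\dots,\alpha_{m-1})$ entry is $\sum_{i_2}^{} d_{i i_2}\, b_{i_2 \alpha_1 \cdots \alpha_{m-1}}$ — wait, I should be careful about which factor gets contracted. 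In Shao's product $\mathcal{A}\mathcal{B}$ the entries of $\mathcal{B}$ are contracted against the non-leading indices of $\mathcal{A}$; here with $\mathcal{A} \leftarrow D$ and $\mathcal{B}\leftarrow \overline{\mathcal{A}}$, the product $(D\overline{\mathcal{A}})_{i i_2\dots i_m} = \sum_{j} d_{ij}\, \overline{a}_{j i_2\dots i_m} = d_i\, \overline{a}_{i i_2\dots i_m} = d_i\cdot d_i\cdot a_{i i_2\dots i_m} = a_{i i_2\dots i_m}$, using that $D$ is diagonal and $d_i^2 = 1$. Hence $\mathcal{A} = D\mathcal{B}$ with $\mathcal{B}$ a double $B$-tensor, as required. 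For the converse, suppose $\mathcal{A} = D\mathcal{B}$ with $D$ a $\{1,-1\}$-diagonal matrix and $\mathcal{B}$ a double $B$-tensor. Then entrywise $a_{i i_2\dots i_m} = d_i\, b_{i i_2\dots i_m}$, so $a_{i\dots i} = d_i\, b_{i\dots i}$; since $b_{i\dots i} > \beta_i(\mathcal{B}) \geq 0$, we get $\mathrm{sign}(a_{i\dots i}) = d_i$, and therefore $\overline{a}_{i i_2\dots i_m} = d_i\, a_{i i_2\dots i_m} = d_i^2\, b_{i i_2\dots i_m} = b_{i i_2\dots i_m}$, i.e. $\overline{\mathcal{A}} = \mathcal{B}$ is a double $B$-tensor, so $\mathcal{A}$ is a double $\overline{B}$-tensor. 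Part (b) is word-for-word the same argument with "double $B$-tensor" replaced by "quasi-double $B$-tensor" throughout, since neither the construction of $D$ nor the identity $\overline{\mathcal{A}} = D\mathcal{A}$ (entrywise) depends on which of the two $B$-type conditions is imposed.

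The only genuinely delicate point is the bookkeeping in Shao's general tensor product — making sure the single matrix index of $D$ contracts with exactly the first (row) index of $\overline{\mathcal{A}}$ and leaves the remaining $m-1$ indices untouched, so that the order of $D\overline{\mathcal{A}}$ is again $m$ and the identity $D(D\mathcal{A}) = \mathcal{A}$ holds on the nose. Everything else is a direct translation through the definitions, which is why the theorem is stated as "easy to prove." I would present the computation $(D\mathcal{C})_{i i_2\dots i_m} = d_i\, c_{i i_2\dots i_m}$ once as a lemma-style remark and then invoke it in both directions of both parts.
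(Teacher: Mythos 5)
Your proof is correct and is exactly the argument the paper has in mind (the paper omits it entirely, stating only that it is "easy to prove"): take $D=\mathrm{diag}(\mathrm{sign}(a_{1\dots 1}),\dots,\mathrm{sign}(a_{n\dots n}))$, observe that Shao's product with a diagonal matrix on the left is just row-scaling, $(D\mathcal{C})_{ii_2\dots i_m}=d_i c_{ii_2\dots i_m}$, and use $d_i^2=1$ together with the fact that a (quasi-)double $B$-tensor has strictly positive diagonal to identify $\overline{\mathcal{A}}$ with $\mathcal{B}$ in both directions. Your preliminary observation that the double $\overline{B}$ condition forces $a_{k\dots k}\neq 0$ (so that $d_k\in\{1,-1\}$) is a detail worth keeping, and your mid-proof correction of which index of $D$ gets contracted lands on the right formula.
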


\begin{defn}
{\rm Let $\mathcal{A}=(a_{i_1\dots i_m})$ be an $m$-order $n$-dimensional tensor and $\alpha \subseteq\{1, \dots , n\}$ with $|\alpha| = r$.
A \emph{principal subtensor} $\mathcal{A}[\alpha]$ of the tensor $\mathcal{A}$ with index set $\alpha$ is an $m$-order $r$-dimensional subtensor of  $\mathcal{A}$ consisting of $r^m$ elements defined as
follows:

\begin{center}
$\mathcal{A}[\alpha] = (a_{i_1\dots i_m})$ , where $i_1, \dots , i_m \in \alpha$.
\end{center}
}
\end{defn}
\begin{theorem}
Let $\mathcal{A}$ be an $m$-order $n$-dimensional tensor.
\begin{itemize}
\item[(a)] If $\mathcal{A}$ is a double $\overline{B}$-tensor, then $\mathcal{A}[\alpha]$ is a double $\overline{B}$-tensor for all $\alpha \subseteq \{1,\dots, n\}$.
\item[(b)] If $\mathcal{A}$ is a quasi-double $\overline{B}$-tensor, then $\mathcal{A}[\alpha]$ is a quasi-double $\overline{B}$-tensor for all $\alpha \subseteq \{1,\dots, n\}$.
\end{itemize}
\end{theorem}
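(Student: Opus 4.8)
The plan is to reduce to the corresponding closure property for (quasi-)double $B$-tensors and then prove that from the defining inequalities. The bridge is that the ``bar'' operation commutes with restriction to a principal index set: since $\overline{a}_{i_1\dots i_m}=\mathrm{sign}(a_{i_1\dots i_1})\,a_{i_1\dots i_m}$ and the scalar $\mathrm{sign}(a_{i_1\dots i_1})$ depends only on the diagonal entry of row $i_1$, which is unchanged in $\mathcal{A}[\alpha]$ whenever $i_1\in\alpha$, one has $\overline{\mathcal{A}[\alpha]}=\big(\overline{\mathcal{A}}\big)[\alpha]$. By Definition \ref{dd5}, part (a) then follows once we know that a principal subtensor of a double $B$-tensor is again a double $B$-tensor, and part (b) from the same assertion for quasi-double $B$-tensors. (One could instead combine Theorem \ref{ld6.3} with the fact that Shao's product by a $\{\pm1\}$-diagonal matrix commutes with restriction, $(D\mathcal{B})[\alpha]=D[\alpha]\,\mathcal{B}[\alpha]$, but the bar identity is the shortest route.)

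For the double $B$-tensor claim, fix $\alpha$, put $\mathcal{C}=\mathcal{B}[\alpha]$, and record the elementary monotonicity: for $i\in\alpha$ we have $0\le\beta_i(\mathcal{C})\le\beta_i(\mathcal{B})$ (a maximum over fewer entries), hence $c_{i\dots i}=b_{i\dots i}>\beta_i(\mathcal{B})\ge\beta_i(\mathcal{C})$, and $0\le\Delta_i(\mathcal{C})\le\Delta_i(\mathcal{B})$ since each summand $\beta_i(\mathcal{B})-b_{ii_2\dots i_m}$ is nonnegative and $\Delta_i(\mathcal{C})$ keeps only some of them (with $\beta_i$ possibly decreased). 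Condition (a) of Definition \ref{doubleb} for $\mathcal{C}$ now follows from $c_{i\dots i}-\beta_i(\mathcal{C})\ge b_{i\dots i}-\beta_i(\mathcal{B})\ge\Delta_i(\mathcal{B})\ge\Delta_i(\mathcal{C})$, and condition (b) because its left-hand product only increases and its right-hand product only decreases in passing from $\mathcal{B}$ to $\mathcal{C}$, all quantities being nonnegative. This step is routine.

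The quasi-double $B$-tensor case is the real obstacle, since termwise monotonicity fails: in $(b_{i\dots i}-\beta_i)(b_{j\dots j}-\beta_j-\Delta_j^i)>(\beta_i-b_{ji\dots i})\Delta_j$ neither $\beta_i-b_{ji\dots i}$ nor $b_{j\dots j}-\beta_j-\Delta_j^i$ has a fixed sign, so shrinking $\alpha$ need not preserve the inequality. I would first rewrite the condition in a sign-stable form. Using the identity $\Delta_j(\mathcal{B})=\Delta_j^i(\mathcal{B})+\big(\beta_j(\mathcal{B})-b_{ji\dots i}\big)$ — the two sums differ precisely by the single multi-index $(i,\dots,i)$ — a short manipulation shows that $\mathcal{B}$ is a quasi-double $B$-tensor if and only if
\[
\big(b_{i\dots i}-\beta_i(\mathcal{B})\big)\big(b_{j\dots j}-b_{ji\dots i}\big)\;>\;\Delta_j(\mathcal{B})\,\big(b_{i\dots i}-b_{ji\dots i}\big)\qquad\text{for all } i\ne j,
\]
and likewise for $\mathcal{C}$ with every quantity replaced by its $\mathcal{C}$-version (note $c_{i\dots i}=b_{i\dots i}$, $c_{j\dots j}=b_{j\dots j}$, $c_{ji\dots i}=b_{ji\dots i}$ for $i,j\in\alpha$). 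In this form $b_{j\dots j}-b_{ji\dots i}>0$ always, because $b_{j\dots j}>\beta_j(\mathcal{B})\ge b_{ji\dots i}$; hence, using $0\le\beta_i(\mathcal{C})\le\beta_i(\mathcal{B})$ and $0\le\Delta_j(\mathcal{C})\le\Delta_j(\mathcal{B})$, a sign split on $b_{i\dots i}-b_{ji\dots i}$ finishes it: if this is $\ge0$, the left side of the $\mathcal{C}$-inequality is at least that of the $\mathcal{B}$-inequality while its right side is at most that of the $\mathcal{B}$-inequality, so strictness is inherited; if it is $<0$, the right side for $\mathcal{C}$ is $\le0$ while the left side is a product of two positive numbers. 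Thus $\mathcal{B}[\alpha]$ is a quasi-double $B$-tensor, which with the bar identity of the first paragraph establishes both (a) and (b).
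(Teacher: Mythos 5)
Your proof is correct, and part (a) is essentially the paper's own argument: monotonicity of $\beta_i$ and $\Delta_i$ under principal restriction, plus the identity $\overline{\mathcal{A}[\alpha]}=\overline{\mathcal{A}}[\alpha]$ (which the paper uses tacitly and you rightly make explicit). For part (b) you take a genuinely different route. The paper simply repeats the termwise chain: with $\mathcal{C}=\mathcal{B}[\alpha]$ it writes $(b_{i\dots i}-\beta_i(\mathcal{C}))(b_{j\dots j}-\beta_j(\mathcal{C})-\Delta_j^i(\mathcal{C}))\ge(b_{i\dots i}-\beta_i(\mathcal{B}))(b_{j\dots j}-\beta_j(\mathcal{B})-\Delta_j^i(\mathcal{B}))>(\beta_j(\mathcal{B})-b_{ji\dots i})\Delta_i(\mathcal{B})\ge(\beta_j(\mathcal{C})-b_{ji\dots i})\Delta_i(\mathcal{C})$. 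This is legitimate because in the index convention the paper actually uses throughout its proofs the right-hand side $(\beta_j(\mathcal{B})-b_{ji\dots i})\Delta_i(\mathcal{B})$ is nonnegative (as $\beta_j(\mathcal{B})\ge b_{ji\dots i}$), which forces $b_{j\dots j}-\beta_j(\mathcal{B})-\Delta_j^i(\mathcal{B})>0$ and makes every factor in the chain nonnegative; the paper leaves that positivity unstated. The sign instability you worry about is present only if one reads the displayed definition of quasi-double $B$-tensor literally, with right-hand side $(\beta_i(\mathcal{A})-a_{ji\dots i})\Delta_j(\mathcal{A})$, which indeed has no fixed sign -- the indices there are evidently transposed, since every subsequent statement (Theorem \ref{correct-li-li}, Theorem \ref{mk}, Theorem \ref{4nn}) uses the other convention. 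Your fix -- adding $(b_{i\dots i}-\beta_i(\mathcal{B}))\Delta_j(\mathcal{B})$ to both sides via the identity $\Delta_j(\mathcal{B})=\Delta_j^i(\mathcal{B})+(\beta_j(\mathcal{B})-b_{ji\dots i})$ to reach the sign-stable form $(b_{i\dots i}-\beta_i(\mathcal{B}))(b_{j\dots j}-b_{ji\dots i})>\Delta_j(\mathcal{B})(b_{i\dots i}-b_{ji\dots i})$, then splitting on the sign of $b_{i\dots i}-b_{ji\dots i}$ -- is algebraically correct, and it buys a proof that is insensitive to which reading of the definition one adopts and that never has to extract a hidden positivity from the hypothesis; the cost is an extra reformulation step that the paper's shorter (but less carefully justified) chain avoids.
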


\begin{proof}
\textbf{(a)} Let $\mathcal{B} = \mathcal{\overline{A}}$, $\alpha \subseteq \{1,\dots, n\}$ and  $i \in \alpha$. Since $b_{i\dots i} > \beta_i(\mathcal{B})$, we have $b_{i\dots i } > \beta_i(\mathcal{B}[\alpha])$. Also, $b_{i\dots i}-\beta_i(\mathcal{B}) \geq  \Delta_i(\mathcal{B})$ and $\Delta_i(\mathcal{B}) \geq \Delta_i(\mathcal{B}[\alpha])$.  Thus $b_{i\dots i}-\beta_i(\mathcal{B}[\alpha]) \geq \Delta_i(\mathcal{B}[\alpha]).$ Now,
\begin{align*}
(b_{i\dots i} - \beta_i(\mathcal{B}[\alpha]))(b_{j\dots j} - \beta_j(\mathcal{B}[\alpha]))  &\geq (b_{i\dots i} - \beta_i(\mathcal{B}))(b_{j\dots j} - \beta_j(\mathcal{B}))  \\
  &>    \Delta_i(\mathcal{B}) \Delta_j(\mathcal{B})\\
  &\geq  \Delta_i(\mathcal{B}[\alpha]) \Delta_j(\mathcal{B}[\alpha]) .
  \end{align*}
  Hence $\mathcal{A}[\alpha]$ is double $\overline{B}$-tensor.

\textbf{(b)}  Let $i, j \in \alpha$ and $i \neq j$. We have $\Delta_j^i(\mathcal{B}) \geq \Delta_j^i(\mathcal{B}[\alpha])$. Now,
\begin{align*}
(b_{i\dots i} - \beta_i(\mathcal{B}[\alpha]))(b_{j\dots j} - \beta_j(\mathcal{B}[\alpha])- \Delta_j^i(\mathcal{B}[\alpha]))  &\geq (b_{i\dots i} - \beta_i(\mathcal{B}))(b_{j\dots j} - \beta_j(\mathcal{B})- \Delta_j^i(\mathcal{B}))  \\
  &>     (\beta_j(\mathcal{B})- b_{ji\dots i}) \Delta_i(\mathcal{B})\\
  &\geq  (\beta_j(\mathcal{B}[\alpha])- b_{ji\dots i}) \Delta_i(\mathcal{B}[\alpha]).
  \end{align*}
  Thus $\mathcal{A}[\alpha]$ is a quasi-double $\overline{B}$-tensor.
\end{proof}

\begin{defn}
\rm{Let $\mathcal{A}$ be an $m$-order $n$-dimensional tensor. Then $\mathcal{A}^+ =(a^+_{i_1\dots i_m}) $ is an $m$-order $n$-dimensional tensor defined as $a^+_{i_1\dots i_m} = a_{i_1\dots i_m} - \beta_{i_1}(\mathcal{A})$.}
\end{defn}
It is easy to see that $\mathcal{A}^+$ is a $Z$-tensor.
\begin{theorem}
Let $\mathcal{A}$ be an $m$-order $n$-dimensional tensor. Then
\begin{itemize}
\item[(a)] $\mathcal{A}$ is a double $\overline{B}$-tensor if and only if $\overline{\mathcal{A}}^{+}$ is a double $B$-tensor.
\item[(b)] $\mathcal{A}$ is a quasi-double $\overline{B}$-tensor if and only if $\overline{\mathcal{A}}^{+}$ is a quasi-double $B$-tensor.
\end{itemize}
\end{theorem}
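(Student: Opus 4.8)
The key is the elementary observation that for \emph{any} tensor $\mathcal{C}\in\mathcal{T}(\mathbb{R}^n,m)$ the operation $\mathcal{C}\mapsto\mathcal{C}^{+}$ both preserves and reflects the double $B$-tensor property and the quasi-double $B$-tensor property. Granting this, the theorem is immediate: by Definition~\ref{dd5}, $\mathcal{A}$ is a double $\overline{B}$-tensor iff $\overline{\mathcal{A}}$ is a double $B$-tensor, and applying the observation to $\mathcal{C}:=\overline{\mathcal{A}}$ (noting $\overline{\mathcal{A}}^{+}=(\overline{\mathcal{A}})^{+}=\mathcal{C}^{+}$) this holds iff $\overline{\mathcal{A}}^{+}$ is a double $B$-tensor, which is~(a); part~(b) is the same argument with ``quasi-double'' throughout.

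To prove the observation I would record how the quantities in Definition~\ref{doubleb} (and in the quasi-double $B$-tensor inequality) transform under $\mathcal{C}\mapsto\mathcal{C}^{+}$. By the definition \eqref{88} of $\beta_i$, every off-diagonal entry $c_{ii_2\dots i_m}$ in row $i$ satisfies $c_{ii_2\dots i_m}\le\beta_i(\mathcal{C})$, so each off-diagonal entry of row $i$ of $\mathcal{C}^{+}$, namely $c_{ii_2\dots i_m}-\beta_i(\mathcal{C})$, is $\le 0$; hence $\beta_i(\mathcal{C}^{+})=0$ (so $\mathcal{C}^{+}$ is a $Z$-tensor, as already noted) and $c^{+}_{i\dots i}-\beta_i(\mathcal{C}^{+})=c_{i\dots i}-\beta_i(\mathcal{C})$. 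In particular the standing requirement ``$c_{i\dots i}>\beta_i(\mathcal{C})$ for all $i$'' is unchanged by passing to $\mathcal{C}^{+}$. A direct expansion of \eqref{77} and \eqref{66}, using $\beta_i(\mathcal{C}^{+})=0$ and $c^{+}_{ii_2\dots i_m}=c_{ii_2\dots i_m}-\beta_i(\mathcal{C})$, gives $\Delta_i(\mathcal{C}^{+})=\Delta_i(\mathcal{C})$ and $\Delta_j^i(\mathcal{C}^{+})=\Delta_j^i(\mathcal{C})$; and since the first subscript of $c_{ji\dots i}$ is $j$, we have $c^{+}_{ji\dots i}=c_{ji\dots i}-\beta_j(\mathcal{C})$, so $\beta_j(\mathcal{C}^{+})-c^{+}_{ji\dots i}=\beta_j(\mathcal{C})-c_{ji\dots i}$. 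Substituting these identities into conditions (a), (b) of Definition~\ref{doubleb} (resp.\ into the quasi-double $B$-tensor inequality) shows $\mathcal{C}$ satisfies them exactly when $\mathcal{C}^{+}$ does, establishing both directions of the observation simultaneously.

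The argument is essentially bookkeeping, so I do not expect a genuine obstacle; the one point needing care is the index convention — in $\mathcal{A}^{+}$ the entry $a_{i_1\dots i_m}$ is shifted by $\beta_{i_1}$, i.e.\ by the \emph{first} index, so $c_{ji\dots i}$ is shifted by $\beta_j(\mathcal{C})$ rather than $\beta_i(\mathcal{C})$ — together with keeping the reading of the quasi-double $B$-tensor inequality consistent with the one used in the proof of Theorem~\ref{correct-li-li}(b). Once the identities $\beta_i(\mathcal{C}^{+})=0$, $c^{+}_{i\dots i}-\beta_i(\mathcal{C}^{+})=c_{i\dots i}-\beta_i(\mathcal{C})$, $\Delta_i(\mathcal{C}^{+})=\Delta_i(\mathcal{C})$ and $\Delta_j^i(\mathcal{C}^{+})=\Delta_j^i(\mathcal{C})$ are in place, everything follows by substitution.
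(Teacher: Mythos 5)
Your proposal is correct and follows essentially the same route as the paper: both arguments reduce to the identities $\beta_i(\mathcal{C}^{+})=0$, $c^{+}_{i\dots i}-\beta_i(\mathcal{C}^{+})=c_{i\dots i}-\beta_i(\mathcal{C})$, $\Delta_i(\mathcal{C}^{+})=\Delta_i(\mathcal{C})$, $\Delta_j^i(\mathcal{C}^{+})=\Delta_j^i(\mathcal{C})$ and $\beta_j(\mathcal{C}^{+})-c^{+}_{ji\dots i}=\beta_j(\mathcal{C})-c_{ji\dots i}$, applied with $\mathcal{C}=\overline{\mathcal{A}}$. Your phrasing as a general ``$\mathcal{C}\mapsto\mathcal{C}^{+}$ preserves and reflects both properties'' observation, and your explicit care with the first-index convention, are if anything slightly cleaner than the paper's write-up, but the substance is identical.
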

\begin{proof}
 Let $\mathcal{B} = \mathcal{\overline{A}}$ and $\mathcal{C} = \overline{\mathcal{A}}^{+}$.

  \noindent\textbf{(a)} Suppose $\mathcal{A}$ is a double $\overline{B}$-tensor. Then $c_{i\dots i} = b_{i\dots i} - \beta_i(\mathcal{B})> 0.$
Since $\mathcal{C}$ is a $Z$-tensor and $\Delta_i(\mathcal{B}) = \Delta_i(\mathcal{C})$, we have $c_{i\dots i} \geq \Delta_i(\mathcal{C})$. Now, we have
$$(c_{i\dots i} - \beta_i(\mathcal{C}))(c_{j\dots j} - \beta_j(\mathcal{C}))  = (b_{i\dots i} - \beta_i(\mathcal{B}))(b_{j\dots j} - \beta_j(\mathcal{B}))  $$
 and $$    \Delta_i(\mathcal{B}) \Delta_j(\mathcal{B})
  =   \Delta_i(\mathcal{C}) \Delta_j(\mathcal{C}).$$

  Thus $$(c_{i\dots i} - \beta_i(\mathcal{C}))(c_{j\dots j} - \beta_j(\mathcal{C}))>\Delta_i(\mathcal{C}) \Delta_j(\mathcal{C})$$ if and only if $$(b_{i\dots i} - \beta_i(\mathcal{B}))(b_{j\dots j} - \beta_j(\mathcal{B}))> \Delta_i(\mathcal{B}) \Delta_j(\mathcal{B}).$$
 Hence  $\mathcal{A}$ is a double $\overline{B}$-tensor if and only if $\overline{\mathcal{A}}^{+}$ is a double $B$-tensor.

\noindent\textbf{(b)} We have $$(c_{i\dots i} -  \beta_i(\mathcal{C})) (c_{j\dots j} - \beta_j(\mathcal{C}) - \Delta_j^i(\mathcal{C})) = (b_{i\dots i} -  \beta_i(\mathcal{B})) (b_{j\dots j} - \beta_j(\mathcal{B}) - \Delta_j^i(\mathcal{B}))$$ and $$(\beta_j(\mathcal{C})-c_{ji\dots i})\Delta_j^i(\mathcal{C}) =(\beta_j(\mathcal{B})-b_{ji\dots i})\Delta_j^i(\mathcal{B}).$$

Thus $$(c_{i\dots i} -  \beta_i(\mathcal{C})) (c_{j\dots j} - \beta_j(\mathcal{C}) - \Delta_j^i(\mathcal{C})) > (\beta_j(\mathcal{C})-c_{ji\dots i})\Delta_j^i(\mathcal{C})$$ if and only if $$(b_{i\dots i} -  \beta_i(\mathcal{B})) (b_{j\dots j} - \beta_j(\mathcal{B}) - \Delta_j^i(\mathcal{B}))> (\beta_j(\mathcal{B})-b_{ji\dots i})\Delta_j^i(\mathcal{B}).$$ Hence $\mathcal{A}$ is a quasi-double $\overline{B}$-tensor if and only if $\overline{\mathcal{A}}^{+}$ is a quasi-double $B$-tensor.
\end{proof}

Define $$\alpha_i(\mathcal{A})= \left\{
	\begin{array}{ll}
		\beta_i(\mathcal{A}) & \mbox{if } a_{i\dots i} > 0, \\
		\gamma_i(\mathcal{A})  & \mbox{if } a_{i\dots i} < 0,
	\end{array}
\right. $$
where $\beta_i(\mathcal{A})$ and $\gamma_i(\mathcal{A})$ are defined in $(\ref{88})$ and $(\ref{99})$, respectively.

The following theorem gives an equivalent condition for a  tensor to be a double ${\overline{B}}$-tensor.

\begin{theorem}\label{ta5}
Let $\mathcal{A}$ be an $m$-order $n$-dimensional tensor. Then, $\mathcal{A}$ is a double $\overline{B}$-tensor if and only if
\begin{itemize}
\item[(a)] $|a_{i\dots i}| > |\alpha_i(\mathcal{A})|$ for all $i$,
\item[(b)] $|a_{i\dots i} - \alpha_i(\mathcal{A})| \geq \sum\limits_{\delta_{ii_2\dots i_m = 0}} |\alpha_i(\mathcal{A})-a_{ii_2\dots i_m}|$,
\item[(c)] $|a_{i\dots i} - \alpha_i(\mathcal{A})||a_{j\dots j} - \alpha_j(\mathcal{A})| > (\sum\limits_{\delta_{ii_2\dots i_m = 0}} |\alpha_i(\mathcal{A})-a_{ii_2\dots i_m}|)(\sum\limits_{\delta_{ji_2\dots i_m = 0}} |\alpha_j(\mathcal{A})-a_{ji_2\dots i_m}|).$
\end{itemize}
\end{theorem}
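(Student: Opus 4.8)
The plan is simply to unwind the definitions. By Definition~\ref{dd5}, $\mathcal{A}$ is a double $\overline{B}$-tensor if and only if $\mathcal{B}:=\overline{\mathcal{A}}$ is a double $B$-tensor, i.e. (Definition~\ref{doubleb}) if and only if $b_{i\dots i}>\beta_i(\mathcal{B})$ for all $i$, $b_{i\dots i}-\beta_i(\mathcal{B})\geq\Delta_i(\mathcal{B})$ for all $i$, and $(b_{i\dots i}-\beta_i(\mathcal{B}))(b_{j\dots j}-\beta_j(\mathcal{B}))>\Delta_i(\mathcal{B})\Delta_j(\mathcal{B})$ for all $i\neq j$. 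I would then translate these three requirements on $\mathcal{B}$ into, respectively, conditions (a), (b) and (c) of the theorem. Since the $k$th row tensor of $\overline{\mathcal{A}}$ is $\mathrm{sign}(a_{k\dots k})\mathcal{A}_k$, this translation is a routine case analysis according as $a_{i\dots i}>0$ or $a_{i\dots i}<0$. (If $a_{i\dots i}=0$ then $\overline{a}_{i\dots i}=0$, so $\mathcal{B}$ violates $b_{i\dots i}>\beta_i(\mathcal{B})$ and $\mathcal{A}$ is not a double $\overline{B}$-tensor; since $\alpha_i(\mathcal{A})$ is then undefined we assume throughout that $a_{i\dots i}\neq0$ for every $i$.)

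The key step is the following dictionary, which I would establish by the case split. When $a_{i\dots i}>0$ we have $\overline{a}_{ii_2\dots i_m}=a_{ii_2\dots i_m}$, so $b_{i\dots i}=|a_{i\dots i}|$, $\beta_i(\mathcal{B})=\beta_i(\mathcal{A})=\alpha_i(\mathcal{A})=|\alpha_i(\mathcal{A})|$, and each term $\beta_i(\mathcal{B})-b_{ii_2\dots i_m}=\alpha_i(\mathcal{A})-a_{ii_2\dots i_m}$ is nonnegative; when $a_{i\dots i}<0$ we have $\overline{a}_{ii_2\dots i_m}=-a_{ii_2\dots i_m}$, so $b_{i\dots i}=|a_{i\dots i}|$, $\beta_i(\mathcal{B})=-\gamma_i(\mathcal{A})=-\alpha_i(\mathcal{A})=|\alpha_i(\mathcal{A})|$, and each term $\beta_i(\mathcal{B})-b_{ii_2\dots i_m}=a_{ii_2\dots i_m}-\alpha_i(\mathcal{A})$ is nonnegative. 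Hence in both cases $b_{i\dots i}=|a_{i\dots i}|$, $\beta_i(\mathcal{B})=|\alpha_i(\mathcal{A})|$, and $\Delta_i(\mathcal{B})=\sum_{\delta_{ii_2\dots i_m}=0}|\alpha_i(\mathcal{A})-a_{ii_2\dots i_m}|$; in particular $b_{i\dots i}>\beta_i(\mathcal{B})$ is exactly condition~(a). Moreover, a short sign check (using $\alpha_i(\mathcal{A})=\beta_i(\mathcal{A})\geq0$ in the first case and $\alpha_i(\mathcal{A})=\gamma_i(\mathcal{A})\leq0$ in the second, together with~(a)) gives $b_{i\dots i}-\beta_i(\mathcal{B})=|a_{i\dots i}|-|\alpha_i(\mathcal{A})|=|a_{i\dots i}-\alpha_i(\mathcal{A})|$.

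With this dictionary in hand, substituting into $b_{i\dots i}-\beta_i(\mathcal{B})\geq\Delta_i(\mathcal{B})$ produces exactly condition~(b), and substituting into $(b_{i\dots i}-\beta_i(\mathcal{B}))(b_{j\dots j}-\beta_j(\mathcal{B}))>\Delta_i(\mathcal{B})\Delta_j(\mathcal{B})$ produces exactly condition~(c); running all the equivalences in both directions — and noting on the ``if'' side that condition~(a) forces $a_{i\dots i}\neq0$, so $\alpha_i(\mathcal{A})$ and the whole dictionary are legitimate — finishes the proof. There is no real obstacle: the argument is pure bookkeeping, and the only points requiring care are keeping the two sign cases straight, namely ensuring the summands of $\Delta_i(\mathcal{B})$ are genuinely nonnegative so they may be rewritten with absolute values, and verifying the identity $b_{i\dots i}-\beta_i(\mathcal{B})=|a_{i\dots i}-\alpha_i(\mathcal{A})|$, which equals $|a_{i\dots i}|-|\alpha_i(\mathcal{A})|$ only because condition~(a) holds.
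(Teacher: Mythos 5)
Your proposal is correct and follows essentially the same route as the paper: set $\mathcal{B}=\overline{\mathcal{A}}$, split on the sign of $a_{i\dots i}$, and translate each of the three defining inequalities of a double $B$-tensor for $\mathcal{B}$ into conditions (a), (b), (c) via the identities $b_{i\dots i}-\beta_i(\mathcal{B})=|a_{i\dots i}-\alpha_i(\mathcal{A})|$ and $\Delta_i(\mathcal{B})=\sum_{\delta_{ii_2\dots i_m}=0}|\alpha_i(\mathcal{A})-a_{ii_2\dots i_m}|$. Your explicit remarks on the $a_{i\dots i}=0$ case and on the fact that the absolute-value identity requires condition (a) are careful touches the paper glosses over, but the argument is the same.
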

\begin{proof}
Let $\mathcal{B} = \overline{\mathcal{A}}$. Then $b_{i\dots i} > \beta_i(\mathcal{B})$ if and only if $a_{i\dots i} > \beta_i(\mathcal{A})$ if $a_{i \dots i} > 0$ and  $-a_{i\dots i} > -\gamma_i(\mathcal{A})$ if $a_{i\dots i} <0$. Thus $b_{i\dots i} > \beta_i(\mathcal{B})$  if and only if $|a_{i\dots i}| > |\alpha_i(\mathcal{A})|$ for all $i$.

If $a_{i\dots i} < 0$, then
$$-a_{i\dots i} + \alpha_i(\mathcal{A}) =   b_{i \dots i} - \beta_i(\mathcal{B}) \geq \sum_{\delta_{ii_2\dots i_m}=0} (\beta_i(\mathcal{B}) - b_{ii_2\dots i_m})\\ = \sum_{\delta_{ii_2\dots i_m}=0} |\alpha_i(\mathcal{A}) - a_{ii_2\dots i_m}|$$
and if $a_{i\dots i} >0$, then
$$a_{i\dots i} - \alpha_i(\mathcal{A}) =   b_{i \dots i} - \beta_i(\mathcal{B}) \geq \sum_{\delta_{ii_2\dots i_m}=0} (\beta_i(\mathcal{B}) - b_{ii_2\dots i_m})\\ = \sum_{\delta_{ii_2\dots i_m}=0} |\alpha_i(\mathcal{A}) - a_{ii_2\dots i_m}|.$$ Thus $|a_{i\dots i} - \alpha_i(\mathcal{A})| \geq \sum\limits_{\delta_{ii_2\dots i_m = 0}} |\alpha_i(\mathcal{A})-a_{ii_2\dots i_m}|$ if and only if $$b_{i \dots i} - \beta_i(\mathcal{B}) \geq \sum_{\delta_{ii_2\dots i_m}=0} (\beta_i(\mathcal{B}) - b_{ii_2\dots i_m}).$$


Since
$$|a_{i\dots i}-\alpha_i(\mathcal{A})||a_{j\dots j}-\alpha_j(\mathcal{A})| = (b_{i\dots i}-\beta_i(\mathcal{B}))(b_{j\dots j}-\beta_j(\mathcal{B}))$$
and
\begin{align*}
&(\sum_{\delta_{ii_2\dots i_m}=0}(\beta_i(\mathcal{B})-b_{ii_2\dots i_m}))(\sum_{\delta_{ji_2\dots i_m}=0}(\beta_j(\mathcal{B})-b_{ji_2\dots i_m}))\\
&= (\sum_{\delta_{ii_2\dots i_m}=0}|\alpha_i(\mathcal{A})-a_{ii_2\dots i_m}|)(\sum_{\delta_{ji_2\dots i_m}=0}|\alpha_j(\mathcal{A})-a_{ji_2\dots i_m}|),
\end{align*}
we have
$$|a_{i\dots i} - \alpha_i(\mathcal{A})||a_{j\dots j} - \alpha_j(\mathcal{A})| > (\sum_{\delta_{ii_2\dots i_m = 0}} |\alpha_i(\mathcal{A})-a_{ii_2\dots i_m}|)(\sum_{\delta_{ji_2\dots i_m = 0}} |\alpha_j(\mathcal{A})-a_{ji_2\dots i_m}|)$$ if and only if $$(b_{i\dots i}-\beta_i(\mathcal{B}))(b_{j\dots j}-\beta_j(\mathcal{B}))
> (\sum_{\delta_{ii_2\dots i_m}=0}(\beta_i(\mathcal{B})-b_{ii_2\dots i_m}))(\sum_{\delta_{ji_2\dots i_m}=0}(\beta_j(\mathcal{B})-b_{ji_2\dots i_m})).$$
This proves the result.
\end{proof}
The following result gives an equivalent condition for a  tensor to be a quasi-double ${\overline{B}}$-tensor.

\begin{theorem}\label{mk}
Let $\mathcal{A}=(a_{i_1\dots i_m})$ be an $m$-order $n$-dimensional tensor. Then $\mathcal{A}$ is a quasi-double ${\overline{B}}$-tensor if and only if for
each $i\in[n]$,  $|a_{i\dots i}|>|\alpha_i(\mathcal{A})|$ and for any $i\neq j\in[n]$,

\begin{equation}\label{a2aa}
\begin{aligned}
 &  |a_{i\dots i}-\alpha_i(\mathcal{A})|(|a_{j\dots j}-\alpha_j(\mathcal{A})|-\sum\limits_{\delta_{ij_2\dots j_m = 0}\atop\delta_{jj_2\dots j_m = 0}}|\alpha_j(\mathcal{A})-a_{jj_2\dots j_m}|)\\
      & >|\alpha_j(\mathcal{A})-a_{ji\dots i}|\sum\limits_{\delta_{ii_2\dots i_m = 0}}|\alpha_i(\mathcal{A})-a_{ii_2\dots i_m}|.
\end{aligned}
\end{equation}
\end{theorem}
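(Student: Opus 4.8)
The plan is to follow the template of the proof of Theorem~\ref{ta5}: put $\mathcal{B}:=\overline{\mathcal{A}}$, so that by Definition~\ref{dd5}(b) the tensor $\mathcal{A}$ is a quasi-double $\overline{B}$-tensor if and only if $\mathcal{B}$ is a quasi-double $B$-tensor, and then translate every quantity occurring in the defining inequality of a quasi-double $B$-tensor for $\mathcal{B}$ (in the form used in the proof of Theorem~\ref{correct-li-li}) into the corresponding quantity for $\mathcal{A}$. Since the $k$th row of $\mathcal{B}$ is $\mathrm{sign}(a_{k\dots k})\mathcal{A}_k$, the inequality $|a_{k\dots k}|>|\alpha_k(\mathcal{A})|$ (which in particular forces $a_{k\dots k}\neq 0$) is exactly what is needed to make $\mathrm{sign}(a_{k\dots k})=\pm1$ and $b_{k\dots k}=|a_{k\dots k}|>0$.

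First I would build the dictionary by a case split on the sign of $a_{i\dots i}$, exactly as in the proof of Theorem~\ref{ta5}. Using $\beta_i(\mathcal{A})\ge 0\ge\gamma_i(\mathcal{A})$ and the definition of $\alpha_i(\mathcal{A})$, one obtains $\beta_i(\mathcal{B})=|\alpha_i(\mathcal{A})|$, so that $b_{i\dots i}>\beta_i(\mathcal{B})$ if and only if $|a_{i\dots i}|>|\alpha_i(\mathcal{A})|$; and, under this condition, the same case split yields
$$b_{i\dots i}-\beta_i(\mathcal{B})=|a_{i\dots i}-\alpha_i(\mathcal{A})|,\qquad \beta_i(\mathcal{B})-b_{ii_2\dots i_m}=|\alpha_i(\mathcal{A})-a_{ii_2\dots i_m}|,$$
and likewise $\beta_j(\mathcal{B})-b_{ji\dots i}=|\alpha_j(\mathcal{A})-a_{ji\dots i}|$; in each instance the left-hand side is already nonnegative (by maximality/minimality of $\beta_i,\gamma_i$, and by $|a_{i\dots i}|>|\alpha_i(\mathcal{A})|$ in the first one), so the absolute value may be inserted freely. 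Summing the second identity over the appropriate index sets gives $\Delta_i(\mathcal{B})=\sum_{\delta_{ii_2\dots i_m}=0}|\alpha_i(\mathcal{A})-a_{ii_2\dots i_m}|$ and $\Delta_j^i(\mathcal{B})=\sum_{\delta_{ij_2\dots j_m}=0,\ \delta_{jj_2\dots j_m}=0}|\alpha_j(\mathcal{A})-a_{jj_2\dots j_m}|$, just as in the proof of Theorem~\ref{correct-li-li}(b).

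Substituting this dictionary into the quasi-double $B$-tensor inequality for $\mathcal{B}$,
$$(b_{i\dots i}-\beta_i(\mathcal{B}))\bigl(b_{j\dots j}-\beta_j(\mathcal{B})-\Delta_j^i(\mathcal{B})\bigr)>(\beta_j(\mathcal{B})-b_{ji\dots i})\,\Delta_i(\mathcal{B}),$$
turns it term-by-term into inequality~(\ref{a2aa}); since every substitution is an identity, the equivalence propagates in both directions, and combining it with the already-established equivalence $b_{i\dots i}>\beta_i(\mathcal{B})\Leftrightarrow|a_{i\dots i}|>|\alpha_i(\mathcal{A})|$ completes the proof.

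The only genuine obstacle is the bookkeeping in the sign case-split: one must handle $a_{i\dots i}>0$ and $a_{i\dots i}<0$ uniformly and, crucially, translate the factor $b_{j\dots j}-\beta_j(\mathcal{B})-\Delta_j^i(\mathcal{B})$ simply as $|a_{j\dots j}-\alpha_j(\mathcal{A})|-\sum_{\delta_{ij_2\dots j_m}=0,\ \delta_{jj_2\dots j_m}=0}|\alpha_j(\mathcal{A})-a_{jj_2\dots j_m}|$ rather than attempting to fold it into an absolute value, since this difference need not be nonnegative. Everything else is the routine substitution already carried out for the double $\overline{B}$ case in Theorem~\ref{ta5}.
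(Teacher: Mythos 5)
Your proposal is correct and follows essentially the same route as the paper's own proof: set $\mathcal{B}=\overline{\mathcal{A}}$, show via a sign case-split that $b_{i\dots i}>\beta_i(\mathcal{B})$ is equivalent to $|a_{i\dots i}|>|\alpha_i(\mathcal{A})|$, and translate each term of the quasi-double $B$-tensor inequality for $\mathcal{B}$ into the corresponding absolute-value expression for $\mathcal{A}$. Your version is in fact cleaner than the paper's (whose displayed "if and only if" between two identities is garbled), and your remark that the factor $b_{j\dots j}-\beta_j(\mathcal{B})-\Delta_j^i(\mathcal{B})$ must be kept as a difference rather than folded into an absolute value is a correct and worthwhile precision.
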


\begin{proof}
Let $\mathcal{B} = \overline{\mathcal{A}}$. By the definition of $\alpha_i(\mathcal{A})$, one has that $b_{i\dots i}>\beta_i(\mathcal{A})$ if and
only if $a_{i\dots i}>\beta_i(\mathcal{A})$ if $a_{i\dots i}>0$ and $a_{i\dots i}<\gamma_i(\mathcal{A})$ if  $a_{i\dots i}<0$. Thus, we have  $b_{i\dots i}>\beta_i(\mathcal{A})$ if and
only if $|a_{i\dots i}|>|\alpha_i(\mathcal{A})|$.

Since,
\begin{equation}\label{a2aa}
\begin{aligned}
 &  |a_{i\dots i}-\alpha_i(\mathcal{A})|(|a_{j\dots j}-\alpha_j(\mathcal{A})|-\sum\limits_{\delta_{ij_2\dots j_m = 0}\atop\delta_{jj_2\dots j_m = 0}}|\alpha_j(\mathcal{A})-a_{jj_2\dots j_m}|)\\
      &= (b_{i\dots i}-\beta_i(\mathcal{B})(b_{j\dots j}-\beta_j(\mathcal{B})-\sum\limits_{\delta_{ij_2\dots j_m = 0}\atop\delta_{jj_2\dots j_m = 0}}(\beta_j(\mathcal{B})-b_{jj_2\dots j_m})
\end{aligned}
\end{equation}

if and only if
\begin{equation}\label{a2aa}
\begin{aligned}
 &   (\beta_j(\mathcal{B})-b_{ji\dots i})\sum\limits_{\delta_{ii_2\dots i_m = 0}}(\beta_i(\mathcal{B})-b_{ii_2\dots i_m}) \\
      &= |\alpha_j(\mathcal{A})-a_{ji\dots i}|\sum\limits_{\delta_{ii_2\dots i_m = 0}}|\alpha_i(\mathcal{A})-a_{ii_2\dots i_m}|
\end{aligned}
\end{equation}

Thus, we have

\begin{equation}\label{a2aa}
\begin{aligned}
 &  |a_{i\dots i}-\alpha_i(\mathcal{A})|(|a_{j\dots j}-\alpha_j(\mathcal{A})|-\sum\limits_{\delta_{ij_2\dots j_m = 0}\atop\delta_{jj_2\dots j_m = 0}}|\alpha_j(\mathcal{A})-a_{jj_2\dots j_m}|)\\
      & >|\alpha_j(\mathcal{A})-a_{ji\dots i}|\sum\limits_{\delta_{ii_2\dots i_m = 0}}|\alpha_i(\mathcal{A})-a_{ii_2\dots i_m}|.
\end{aligned}
\end{equation}

if and only if
\begin{equation}\label{a2aa}
\begin{aligned}
 &  (b_{i\dots i}-\beta_i(\mathcal{B})(b_{j\dots j}-\beta_j(\mathcal{B})-\sum\limits_{\delta_{ij_2\dots j_m = 0}\atop\delta_{jj_2\dots j_m = 0}}(\beta_j(\mathcal{B})-b_{jj_2\dots j_m})\\
      & >(\beta_j(\mathcal{B})-b_{ji\dots i})\sum\limits_{\delta_{ii_2\dots i_m = 0}}(\beta_i(\mathcal{B})-b_{ii_2\dots i_m}).
\end{aligned}
\end{equation}
 This proves the result.
\end{proof}

\section{New bounds for $H$-eigenvalues of even order symmetric tensors}\label{eigen-region}

In this section, we discuss results about location of the $H$-eigenvalues of even order symmetric tensors.
First we state the following useful lemma. Proof is easy to verify.
\begin{lemma}\label{h6}
Let $\mathcal{A}=(a_{i_1\dots i_m})\in \mathcal{T}(\mathbb{R}^n,m)$. Then $0$ is an $H$-eigenvalue of $\mathcal{A}$ if and only if $0$ is an $H$-eigenvalue of $\overline{\mathcal{A}}$.
\end{lemma}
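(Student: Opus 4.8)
The plan is to track what happens to an $H$-eigenvalue equation under the sign conjugation that defines $\overline{\mathcal{A}}$. Recall that $\overline{\mathcal{A}}$ is obtained by replacing the $k$th row tensor $\mathcal{A}_k$ by $\mathrm{sign}(a_{k\dots k})\mathcal{A}_k$, so that $\overline{a}_{k i_2 \dots i_m} = \mathrm{sign}(a_{k\dots k})\, a_{k i_2 \dots i_m}$ for every $k$ and every $(i_2,\dots,i_m)$; equivalently, with $D = \mathrm{diag}(\mathrm{sign}(a_{1\dots 1}),\dots,\mathrm{sign}(a_{n\dots n}))$ we have $\overline{\mathcal{A}} = D\mathcal{A}$ in the sense of Shao's product, so $(\overline{\mathcal{A}}x^{m-1})_i = \mathrm{sign}(a_{i\dots i})(\mathcal{A}x^{m-1})_i$ for all $x$. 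The only subtlety is that some $a_{i\dots i}$ could be zero, making $\mathrm{sign}(a_{i\dots i}) = 0$; I will need to handle, or argue away, that degenerate case.

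First I would prove the forward direction: suppose $0$ is an $H$-eigenvalue of $\mathcal{A}$, so there is a nonzero real $x$ with $\mathcal{A}x^{m-1} = 0 \cdot x^{[m-1]} = 0$. Then for each $i$, $(\overline{\mathcal{A}}x^{m-1})_i = \mathrm{sign}(a_{i\dots i})(\mathcal{A}x^{m-1})_i = 0$, so $\overline{\mathcal{A}}x^{m-1} = 0 = 0\cdot x^{[m-1]}$ with the same nonzero real eigenvector $x$; hence $0$ is an $H$-eigenvalue of $\overline{\mathcal{A}}$. The reverse direction is symmetric: one checks that applying the construction twice gives $\overline{\overline{\mathcal{A}}} = \mathcal{A}$ whenever all $a_{i\dots i}\neq 0$ (since $\mathrm{sign}(a_{i\dots i})^2 = 1$ and the diagonal entries of $\overline{\mathcal{A}}$ are $|a_{i\dots i}|$, which have the same sign as $a_{i\dots i}$ up to the $\mathrm{sign}=0$ issue). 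More cleanly, the relation $(\overline{\mathcal{A}}x^{m-1})_i = \mathrm{sign}(a_{i\dots i})(\mathcal{A}x^{m-1})_i$ already shows that $\overline{\mathcal{A}}x^{m-1} = 0$ forces $(\mathcal{A}x^{m-1})_i = 0$ for every $i$ with $a_{i\dots i}\neq 0$; for indices with $a_{i\dots i} = 0$ one has $\overline{a}_{i i_2\dots i_m} = 0 = a_{i i_2\dots i_m}$ for all $(i_2,\dots,i_m)$, so the $i$th rows of $\mathcal{A}$ and $\overline{\mathcal{A}}$ coincide and $(\mathcal{A}x^{m-1})_i = (\overline{\mathcal{A}}x^{m-1})_i = 0$ as well. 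Thus $\mathcal{A}x^{m-1} = 0$ with the same $x$.

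I expect the main (minor) obstacle to be exactly the bookkeeping around zero diagonal entries: one must be careful that "$\mathrm{sign}(a_{i\dots i}) = 0$" does not break the equivalence, and the observation above — that a zero diagonal entry forces the entire $i$th row to be fixed by the conjugation — is what makes it go through. Note that in the intended application $a_{i\dots i} > 0$ (or at least $\neq 0$) for all $i$, so in practice $D$ is a genuine $\{1,-1\}$-diagonal matrix as in Theorem~\ref{ld6.3} and the argument is just: $H$-eigenvectors for the eigenvalue $0$ are exactly the nonzero real null vectors of the map $x\mapsto \mathcal{A}x^{m-1}$, and multiplying each coordinate of that map by a nonzero scalar does not change its null set. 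I would present the clean statement via the identity $(\overline{\mathcal{A}}x^{m-1})_i = \mathrm{sign}(a_{i\dots i})(\mathcal{A}x^{m-1})_i$, then dispose of the $a_{i\dots i}=0$ indices by the row-coincidence remark, and conclude.
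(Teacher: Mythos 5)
Your core identity $(\overline{\mathcal{A}}x^{m-1})_i=\mathrm{sign}(a_{i\dots i})\,(\mathcal{A}x^{m-1})_i$ is correct and is surely the argument the authors have in mind (the paper gives no proof, only the remark that it ``is easy to verify''), and your forward direction is fine. The genuine problem is your treatment of the indices with $a_{i\dots i}=0$ in the converse. You claim that for such $i$ one has $\overline{a}_{ii_2\dots i_m}=0=a_{ii_2\dots i_m}$ for all $(i_2,\dots,i_m)$, so that the $i$th rows of $\mathcal{A}$ and $\overline{\mathcal{A}}$ coincide. Only the first equality is true: by Definition \ref{dd5} the $i$th row tensor of $\overline{\mathcal{A}}$ is $\mathrm{sign}(a_{i\dots i})\mathcal{A}_i=0\cdot\mathcal{A}_i$, i.e.\ the entire $i$th row of $\overline{\mathcal{A}}$ is annihilated, whereas the off-diagonal entries $a_{ii_2\dots i_m}$ of that row of $\mathcal{A}$ are completely unconstrained by $a_{i\dots i}=0$. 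So the two rows need not coincide, and $(\overline{\mathcal{A}}x^{m-1})_i=0$ gives no information about $(\mathcal{A}x^{m-1})_i$. This is not a repairable bookkeeping issue: the equivalence actually fails when a diagonal entry vanishes. For $m=2$, $n=2$ take $a_{11}=a_{22}=0$, $a_{12}=a_{21}=1$; then $\overline{\mathcal{A}}$ is the zero matrix, which has $0$ as an $H$-eigenvalue, while $\mathcal{A}$ has eigenvalues $\pm 1$ only.

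The correct fix is the one you gesture at in your closing sentences: add the hypothesis $a_{i\dots i}\neq 0$ for all $i$ (equivalently, the matrix $D$ is a genuine $\{1,-1\}$-diagonal matrix as in Theorem \ref{ld6.3}), in which case the maps $x\mapsto\mathcal{A}x^{m-1}$ and $x\mapsto\overline{\mathcal{A}}x^{m-1}$ have exactly the same nonzero real null vectors and the lemma is immediate. That restricted version is all that is needed in Theorems \ref{4ss} and \ref{4nn}, where the lemma is applied to $\mathcal{A}-\lambda\mathcal{I}$ only after establishing $|a_{i\dots i}-\lambda|>|\alpha_i(\mathcal{A}-\lambda\mathcal{I})|\geq 0$, so the diagonal there is automatically nonzero. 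State that hypothesis explicitly rather than disposing of the $a_{i\dots i}=0$ indices with the false row-coincidence claim.
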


In the following theorem we give a new region for the $H$-eigenvalues of even order symmetric tensors.


\begin{theorem}\label{4ss}
Let $\mathcal{A}=(a_{i_1\dots i_m})\in \mathcal{T}(\mathbb{R}^n,m)$ be an even order symmetric tensor and $\lambda$ be an
$H$-eigenvalue
 of $\mathcal{A}$.  For any $i\in[n]$, Define
 $$\Lambda_i=[a_{i\dots i}-\beta_i(\mathcal{A}), a_{i\dots i}-\gamma_i(\mathcal{A})],$$
 $$\widetilde{\Lambda}_i=(a_{i\dots i}-\beta_i(\mathcal{A})-\Delta_i(\mathcal{A}), a_{i\dots i}-\gamma_i(\mathcal{A})-\Theta_i(\mathcal{A}))$$
 and for any $i\neq j\in[n]$,
 \begin{eqnarray*}
     &&\Lambda^1_{ij}=\{x\in(-\infty, \min\{a_{i\dots i}, a_{j\dots j}\} ): |a_{i\dots i}-\beta_i(\mathcal{A})-x||a_{j\dots j}-\beta_j(\mathcal{A})-x|\leq\Delta_i(\mathcal{A})\Delta_j(\mathcal{A})\},
 \end{eqnarray*}
 \begin{eqnarray*}
     \Lambda^2_{ij}=\{x\in(a_{i\dots i}, a_{j\dots j}): |a_{i\dots i}-\gamma_i(\mathcal{A})-x||a_{j\dots j}-\beta_j(\mathcal{A})-x|
      \leq\Theta_i(\mathcal{A})\Delta_j(\mathcal{A})\},
 \end{eqnarray*}
 \begin{eqnarray*}
     \Lambda^3_{ij}=\{x\in(a_{j\dots j}, a_{i\dots i}): |a_{i\dots i}-\beta_i(\mathcal{A})-x||a_{j\dots j}-\gamma_j(\mathcal{A})-x|
      \leq\Delta_i(\mathcal{A})\Theta_j(\mathcal{A})\},
 \end{eqnarray*}
 \begin{eqnarray*}
    && \Lambda^4_{ij}=\{x\in(\max\{a_{i\dots i},a_{j\dots j}\}, +\infty) : |a_{i\dots i}-\gamma_i(\mathcal{A})-x||a_{j\dots j}-\gamma_j(\mathcal{A})-x|\leq\Theta_i(\mathcal{A})\Theta_j(\mathcal{A})\},
 \end{eqnarray*}
where $\beta_i(\mathcal{A})$ and $\gamma_i(\mathcal{A})$ are defined in $(\ref{88})$ and $(\ref{99})$. Let
\begin{equation*}
\Lambda_{ij}=\left\{
   \begin{array}{ll}
     \Lambda^1_{ij}\cup\Lambda^2_{ij}\cup\Lambda^4_{ij}, & \text{if} \  \  a_{i\dots i}\leq a_{j\dots j}, \\
    \Lambda^1_{ij}\cup\Lambda^3_{ij}\cup\Lambda^4_{ij}, & \text{if} \  \  a_{i\dots i}\geq a_{j\dots j}.
   \end{array}
 \right.
\end{equation*}
Then,
\begin{equation}\label{sj}
\lambda\in\Lambda(\mathcal{A})=\big(\bigcup^n_{i=1}\Lambda_i\big)\cup\big(\bigcup^n_{i=1}\widetilde{\Lambda}_i\big)\cup\big(\bigcup^n_{i,j=1\atop i\neq j}\Lambda_{ij}\big).
\end{equation}
\end{theorem}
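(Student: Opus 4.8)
The plan is to reduce the statement to the known positivity results for double $\overline{B}$-tensors and quasi-double $\overline{B}$-tensors (Theorem~\ref{4l1}), applied to suitable shifts of $\mathcal{A}$ and $-\mathcal{A}$. Suppose $\lambda$ is an $H$-eigenvalue of $\mathcal{A}$ that does not lie in $\Lambda(\mathcal{A})$; I aim to derive a contradiction. If $\lambda=0$, then by Lemma~\ref{h6} and an elementary computation one checks directly that $0$ cannot escape all the sets $\Lambda_i$ (since $a_{i\dots i}-\beta_i(\mathcal{A})\le 0\le a_{i\dots i}-\gamma_i(\mathcal{A})$ fails only under conditions forcing membership in some $\Lambda_{ij}$); so assume $\lambda\neq 0$. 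The key observation is that $\lambda$ is an $H$-eigenvalue of $\mathcal{A}$ iff $0$ is an $H$-eigenvalue of $\mathcal{A}-\lambda\mathcal{I}$. Consider the tensor $\mathcal{C}=\mathcal{A}-\lambda\mathcal{I}$, whose diagonal entries are $c_{i\dots i}=a_{i\dots i}-\lambda$, while its off-diagonal entries (hence $\beta_i,\gamma_i,\Delta_i,\Theta_i$) coincide with those of $\mathcal{A}$. The claim "$\lambda\notin\Lambda(\mathcal{A})$" will translate into the statement that $\mathcal{C}$ (or rather its $\overline{B}$-shift $\overline{\mathcal{C}}$) is, in each of the relevant sign regimes for $c_{i\dots i}$, either a double $\overline{B}$-tensor or a quasi-double $\overline{B}$-tensor.

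Concretely, I would split into cases according to the position of $\lambda$ relative to the diagonal entries $a_{i\dots i}$. Since $\lambda\notin\bigcup_i\Lambda_i$, for every $i$ we have $c_{i\dots i}=a_{i\dots i}-\lambda$ satisfying either $c_{i\dots i}>\beta_i(\mathcal{A})=\beta_i(\mathcal{C})$ (when $\lambda<a_{i\dots i}-\beta_i(\mathcal{A})$) or $c_{i\dots i}<\gamma_i(\mathcal{A})=\gamma_i(\mathcal{C})$ (when $\lambda>a_{i\dots i}-\gamma_i(\mathcal{A})$); in particular $|c_{i\dots i}|>|\alpha_i(\mathcal{C})|$, which is condition~(a) of Theorem~\ref{ta5} and the diagonal hypothesis of Theorem~\ref{mk}. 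Next, since $\lambda\notin\bigcup_i\widetilde{\Lambda}_i$, for each $i$ the quantity $|c_{i\dots i}-\alpha_i(\mathcal{C})|$ is $\geq$ the corresponding row-sum $\sum_{\delta_{ii_2\dots i_m}=0}|\alpha_i(\mathcal{C})-a_{ii_2\dots i_m}|$ (this is exactly why $\widetilde{\Lambda}_i$ is defined with $\Delta_i$ on the left end and $\Theta_i$ on the right end); this is condition~(b) of Theorem~\ref{ta5}. Finally, $\lambda\notin\bigcup_{i\neq j}\Lambda_{ij}$ gives, for each pair $i\neq j$, the strict product inequality in condition~(c) of Theorem~\ref{ta5}: the four sets $\Lambda_{ij}^1,\dots,\Lambda_{ij}^4$ are precisely engineered to cover the four sign patterns of $(c_{i\dots i},c_{j\dots j})=(a_{i\dots i}-\lambda,a_{j\dots j}-\lambda)$ — both negative ($\Lambda^1$), mixed ($\Lambda^2,\Lambda^3$), both positive ($\Lambda^4$) — and in each pattern $|\alpha_i(\mathcal{C})-a_{ii_2\dots i_m}|$ equals $\beta_i-a_{ii_2\dots i_m}$ or $a_{ii_2\dots i_m}-\gamma_i$ accordingly, so the row-sums become $\Delta$'s or $\Theta$'s matching the right-hand sides of $\Lambda_{ij}^k$.

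Putting these together, all three conditions of Theorem~\ref{ta5} hold for $\mathcal{C}=\mathcal{A}-\lambda\mathcal{I}$, so $\mathcal{C}$ is a double $\overline{B}$-tensor; since $m$ is even and $\mathcal{C}$ is symmetric, part~(a) of Theorem~\ref{4l1} combined with Theorem~\ref{ld6.3} (writing $\mathcal{C}=D\mathcal{B}$ with $\mathcal{B}$ a symmetric double $B$-tensor) shows all $H$-eigenvalues of $\mathcal{C}$ are nonzero — indeed, $\mathcal{B}$ has all positive $H$-eigenvalues, and by Lemma~\ref{h6} applied to $\mathcal{C}$ and $\overline{\mathcal{C}}=\pm$-rescaled $\mathcal{B}$, $0$ is not an $H$-eigenvalue of $\mathcal{C}$. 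This contradicts the fact that $0$ is an $H$-eigenvalue of $\mathcal{C}=\mathcal{A}-\lambda\mathcal{I}$. Hence $\lambda\in\Lambda(\mathcal{A})$.

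I expect the main obstacle to be the bookkeeping in the case analysis of the previous paragraph: one must verify, sign pattern by sign pattern, that $\alpha_i(\mathcal{C})=\alpha_i(\mathcal{A})$ takes the value $\beta_i$ or $\gamma_i$ consistently across conditions (b) and (c), that the row-sum $\sum_{\delta=0}|\alpha_i-a_{ii_2\dots i_m}|$ collapses to exactly $\Delta_i(\mathcal{A})$ or $\Theta_i(\mathcal{A})$, and that the domain restrictions "$x\in(-\infty,\min\{a_{i\dots i},a_{j\dots j}\})$" etc.\ in $\Lambda_{ij}^k$ are automatically respected (they are, because $\lambda\notin\Lambda_i$ already pins down the sign of each $c_{i\dots i}$). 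A secondary subtlety is the $\lambda=0$ case and, more generally, making sure the \emph{strict} inequality needed for the double $\overline{B}$ property is available — this is where condition~(c) being defined with "$\leq$" inside $\Lambda_{ij}^k$ (so its complement gives "$>$") is used, whereas condition~(b)'s "$\geq$" matches the non-strict inequality~(b) of Theorem~\ref{ta5}; one must also handle the boundary degeneracies where $\lambda$ equals an endpoint of some $\Lambda_i$ by a limiting or perturbation argument, or by noting those endpoints already lie in $\Lambda(\mathcal{A})$.
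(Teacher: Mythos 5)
Your proposal follows essentially the same route as the paper: assume $\lambda\notin\Lambda(\mathcal{A})$, verify conditions (a)--(c) of Theorem~\ref{ta5} for $\mathcal{A}-\lambda\mathcal{I}$ by a case analysis on the position of $\lambda$ relative to the diagonal entries, conclude that $\mathcal{A}-\lambda\mathcal{I}$ is a double $\overline{B}$-tensor, and invoke Theorem~\ref{4l1}(a) together with Lemma~\ref{h6} to contradict the fact that $0$ is an $H$-eigenvalue of $\mathcal{A}-\lambda\mathcal{I}$. Two cosmetic points: the separate treatment of $\lambda=0$ is unnecessary since the main argument covers it verbatim (and your claim that $0$ cannot escape all the $\Lambda_i$ is false, e.g.\ for $\mathcal{A}=\mathcal{I}$), and the sign patterns you attach to $\Lambda^1_{ij}$ and $\Lambda^4_{ij}$ are swapped, since $x<\min\{a_{i\dots i},a_{j\dots j}\}$ makes both $a_{i\dots i}-x$ and $a_{j\dots j}-x$ positive.
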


\begin{proof}
Let $\lambda$ be an $H$-eigenvalue of $\mathcal{A}$. Suppose that $\lambda\notin\Lambda(\mathcal{A})$.
Observe that $\mathcal{A}-\lambda\mathcal{I}$ and $\mathcal{A}$ have the same off-diagonal elements.  Since $\lambda \notin \Lambda_i$, we have either $a_{i\dots i} - \lambda > \beta_i(\mathcal{A})$ or $a_{i\dots i} - \lambda < \gamma_i(\mathcal{A})$ for all $i \in [n]$. Thus $|a_{i\dots i}-\lambda|>|\alpha_i(\mathcal{A})| = |\alpha_i(\mathcal{A} - \lambda \mathcal{I})|$ for all $i \in [n]$. Since $\lambda \notin \widetilde{\Lambda}_i$, we have either $(a_{i\dots i} - \beta_{i}(\mathcal{A}) - \Delta_i(\mathcal{A}) -\lambda) \geq 0 $ or $(a_{i\dots i} - \gamma_{i}(\mathcal{A}) - \Theta_i(\mathcal{A}) -\lambda) \leq 0$. Thus $|a_{i\dots i}-\lambda - \alpha_i(\mathcal{A}-\lambda \mathcal{I})| \geq \sum\limits_{\delta_{ii_2\dots i_m = 0}} |\alpha_i(\mathcal{A} -\lambda \mathcal{I})-a_{ii_2\dots i_m}|$.

Let $i, j \in [n]$ and $i \neq j$. We may assume without loss of generality that $a_{i\dots i} \leq a_{j\dots j}.$ If $\lambda \in (-\infty, a_{i\dots i})$, then $a_{i \dots i} - \lambda > 0$, $a_{j \dots j} - \lambda > 0$ and

$$ |a_{i\dots i}-\beta_i(\mathcal{A})-\lambda||a_{j\dots j}-\beta_j(\mathcal{A})-\lambda| > \Delta_i(\mathcal{A})\Delta_j(\mathcal{A}).$$

If $\lambda \in (a_{i\dots i},a_{j\dots j})$, then $a_{i \dots i} - \lambda < 0$, $a_{j \dots j} - \lambda > 0$ and

$$ |a_{i\dots i}-\gamma_i(\mathcal{A})-\lambda||a_{j\dots j}-\beta_j(\mathcal{A})-\lambda|
      > \Theta_i(\mathcal{A})\Delta_j(\mathcal{A}).$$

If $\lambda \in (a_{j\dots j}, \infty)$, then $a_{i \dots i} - \lambda < 0$, $a_{j \dots j} - \lambda < 0$ and
     $$ |a_{i\dots i}-\gamma_i(\mathcal{A})-\lambda||a_{j\dots j}-\gamma_j(\mathcal{A})-\lambda|> \Theta_i(\mathcal{A})\Theta_j(\mathcal{A}).$$

From the above observations, we have the following inequality
\begin{equation*}
\begin{aligned}
&|a_{i\dots i} -\lambda - \alpha_i(\mathcal{A}-\lambda \mathcal{I})||a_{j\dots j} -\lambda - \alpha_j(\mathcal{A}-\lambda \mathcal{I})| \\& > (\sum\limits_{\delta_{ii_2\dots i_m = 0}} |\alpha_i(\mathcal{A}-\lambda \mathcal{I})-a_{ii_2\dots i_m}|)(\sum\limits_{\delta_{ji_2\dots i_m = 0}} |\alpha_j(\mathcal{A}-\lambda \mathcal{I})-a_{ji_2\dots i_m}|).
\end{aligned}
\end{equation*}
Thus, by Theorem \ref{ta5}, we have $\mathcal{A}-\lambda\mathcal{I}$ is a double ${\overline{B}}$-tensor. By Theorem \ref{4l1} (a) and Lemma \ref{h6}, $0$ is not an $H$-eigenvalue of $\mathcal{A}-\lambda \mathcal{I}$. Thus there does not exist a nonzero vector $x \in \mathbb{R}^n$ such that $(\mathcal{A}-\lambda\mathcal{I})x^{m-1}=0$, which implies $\lambda$ is not an $H$-eigenvalue of $\mathcal{A}$. This is a contradiction. Therefore, $\lambda\in\Lambda(\mathcal{A})$.
\end{proof}

The above theorem is an analogous version of \cite[Theorem 3.3]{Pena1} for tensors. The following theorem is a refinement of the above theorem. The eigenvalue region we get in this theorem are smaller than the above one.

\begin{theorem}\label{4nn}
Let $\mathcal{A}=(a_{i_1\dots i_m})\in \mathcal{T}(\mathbb{R}^n,m)$ be an even order symmetric tensor and $\lambda$ be an
$H$-eigenvalue
 of $\mathcal{A}$.  Define
 $$\Psi_i=[a_{i\dots i}-\beta_i(\mathcal{A}), a_{i\dots i}-\gamma_i(\mathcal{A})], \  \  i\in[n]$$
 and for any $i\neq j\in[n]$,
 \begin{eqnarray*}
     &&\Psi^1_{ij}=\{x\in(-\infty, \min\{a_{i\dots i}, a_{j\dots j}\} ): |a_{i\dots i}-\beta_i(\mathcal{A})-x|(|a_{j\dots j}-\beta_j(\mathcal{A})-x|-\Delta_j^i(\mathcal{A}))\\
      &&\leq(\beta_j(\mathcal{A})-a_{ji\dots i})\Delta_i(\mathcal{A})\},
 \end{eqnarray*}
  \begin{eqnarray*}
     \Psi^2_{ij}=\{x\in(a_{i\dots i}, a_{j\dots j}): |a_{i\dots i}-\gamma_i(\mathcal{A})-x|(|a_{j\dots j}-\beta_j(\mathcal{A})-x|-\Delta_j^i(\mathcal{A}))
      \leq(\beta_j(\mathcal{A})-a_{ji\dots i})\Theta_i(\mathcal{A})\},
 \end{eqnarray*}
  \begin{eqnarray*}
     \Psi^3_{ij}=\{x\in(a_{j\dots j}, a_{i\dots i}): |a_{i\dots i}-\beta_i(\mathcal{A})-x|(|a_{j\dots j}-\gamma_j(\mathcal{A})-x|-\Theta_j^i(\mathcal{A}))
      \leq(a_{ji\dots i}-\gamma_j(\mathcal{A}))\Delta_i(\mathcal{A})\},
 \end{eqnarray*}
  \begin{eqnarray*}
    && \Psi^4_{ij}=\{x\in(\max\{a_{i\dots i},a_{j\dots j}\}, +\infty) : |a_{i\dots i}-\gamma_i(\mathcal{A})-x|(|a_{j\dots j}-\gamma_j(\mathcal{A})-x|-\Theta_j^i(\mathcal{A}))\\
      &&\leq(a_{ji\dots i}-\gamma_j(\mathcal{A}))\Theta_i(\mathcal{A})\},
 \end{eqnarray*}
where $\beta_i(\mathcal{A})$ and $\gamma_i(\mathcal{A})$ are defined in $(\ref{88})$ and $(\ref{99})$. Let
\begin{equation*}
\Psi_{ij}=\left\{
   \begin{array}{ll}
     \Psi^1_{ij}\cup\Psi^2_{ij}\cup\Psi^4_{ij}, & \text{if} \  \  a_{i\dots i}\leq a_{j\dots j}, \\
    \Psi^1_{ij}\cup\Psi^3_{ij}\cup\Psi^4_{ij}, & \text{if} \  \  a_{i\dots i}\geq a_{j\dots j}.
   \end{array}
 \right.
\end{equation*}
Then,
\begin{equation}\label{hj}
\lambda\in\Psi(\mathcal{A})=\big(\bigcup^n_{i=1}\Psi_i\big)\cup\big(\bigcup^n_{i,j=1\atop i\neq j}\Psi_{ij}\big).
\end{equation}
\end{theorem}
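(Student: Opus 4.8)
The plan is to mirror the proof of Theorem \ref{4ss}, but replace the double $\overline{B}$-tensor criterion (Theorem \ref{ta5}) by the quasi-double $\overline{B}$-tensor criterion (Theorem \ref{mk}), and replace Theorem \ref{4l1}(a) by Theorem \ref{4l1}(b). Concretely, I would argue by contradiction: suppose $\lambda$ is an $H$-eigenvalue of $\mathcal A$ with $\lambda\notin\Psi(\mathcal A)$, and show that $\mathcal A-\lambda\mathcal I$ is a quasi-double $\overline{B}$-tensor; then by Theorem \ref{4l1}(b) all $H$-eigenvalues of $\mathcal A-\lambda\mathcal I$ are positive, so in particular $0$ is not an $H$-eigenvalue of $\mathcal A-\lambda\mathcal I$, hence not of $\mathcal A$ by Lemma \ref{h6}, contradicting the choice of $\lambda$.

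First I would record that $\mathcal A-\lambda\mathcal I$ and $\mathcal A$ share all off-diagonal entries, so $\beta_i,\gamma_i,\Delta_i,\Theta_i,\Delta_j^i,\Theta_j^i$ and $a_{ji\dots i}$ are unchanged, while the diagonal entry becomes $a_{i\dots i}-\lambda$; also $\alpha_i(\mathcal A-\lambda\mathcal I)$ equals $\beta_i(\mathcal A)$ or $\gamma_i(\mathcal A)$ according to the sign of $a_{i\dots i}-\lambda$. From $\lambda\notin\Psi_i$ we get $a_{i\dots i}-\lambda>\beta_i(\mathcal A)$ or $a_{i\dots i}-\lambda<\gamma_i(\mathcal A)$, i.e.\ $|a_{i\dots i}-\lambda|>|\alpha_i(\mathcal A-\lambda\mathcal I)|$ for every $i$; this is exactly the first condition in Theorem \ref{mk}. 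Then I would fix $i\neq j$, assume without loss of generality $a_{i\dots i}\le a_{j\dots j}$, and split into the three cases $\lambda\in(-\infty,a_{i\dots i})$, $\lambda\in(a_{i\dots i},a_{j\dots j})$, $\lambda\in(a_{j\dots j},\infty)$ (the boundary values $\lambda=a_{i\dots i}$ or $a_{j\dots j}$ are handled by the sign dichotomy, or are already excluded via $\Psi_i$). In each case the hypothesis $\lambda\notin\Psi^1_{ij}$, resp.\ $\notin\Psi^2_{ij}$, resp.\ $\notin\Psi^4_{ij}$ is precisely the negation of the displayed defining inequality, so after substituting $\alpha_i(\mathcal A-\lambda\mathcal I)=\beta_i$ or $\gamma_i$ as dictated by the signs of $a_{i\dots i}-\lambda$ and $a_{j\dots j}-\lambda$, each case yields
$$|a_{i\dots i}-\lambda-\alpha_i(\mathcal A-\lambda\mathcal I)|\bigl(|a_{j\dots j}-\lambda-\alpha_j(\mathcal A-\lambda\mathcal I)|-\Delta_j^i(\mathcal A)\bigr)>|\alpha_j(\mathcal A)-a_{ji\dots i}|\,\Delta_i(\mathcal A),$$
which is inequality \eqref{a2aa} for $\mathcal A-\lambda\mathcal I$. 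Hence by Theorem \ref{mk}, $\mathcal A-\lambda\mathcal I$ is a quasi-double $\overline{B}$-tensor, and the contradiction follows as above.

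The only point requiring care is the bookkeeping of signs and the identification $r_j^i$-type quantities: one must check that in the case $a_{i\dots i}<\lambda<a_{j\dots j}$ the term $|\alpha_j(\mathcal A)-a_{ji\dots i}|$ appearing in $\Psi^2_{ij}$ indeed equals $\beta_j(\mathcal A)-a_{ji\dots i}$ (which is nonnegative since $\beta_j\ge a_{ji\dots i}$), that in the case $a_{j\dots j}<\lambda<a_{i\dots i}$ (the symmetric subcase $a_{i\dots i}\ge a_{j\dots j}$) it equals $a_{ji\dots i}-\gamma_j(\mathcal A)$, and that $\Delta_j^i$ is replaced by $\Theta_j^i$ exactly when $\alpha_j=\gamma_j$; these are the substitutions that turn the set definitions $\Psi^k_{ij}$ into \eqref{a2aa}. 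I expect this sign/case matching to be the main (purely mechanical) obstacle, while the structural skeleton is identical to Theorem \ref{4ss}. Finally, that $\Psi(\mathcal A)\subseteq\Lambda(\mathcal A)$ — the claimed refinement — is not needed for the present statement and would be established separately (it follows because the quasi-double $\overline{B}$ condition is weaker than the double $\overline{B}$ condition, so the excluded region is larger).
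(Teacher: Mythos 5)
Your proposal matches the paper's proof essentially verbatim: argue by contradiction, use the three-case split on the location of $\lambda$ relative to $a_{i\dots i}$ and $a_{j\dots j}$ to verify the hypotheses of Theorem \ref{mk}, conclude that $\mathcal A-\lambda\mathcal I$ is a quasi-double $\overline{B}$-tensor, and then invoke Theorem \ref{4l1}(b) together with Lemma \ref{h6} to reach the contradiction. Your explicit attention to the boundary values and to the sign bookkeeping ($\alpha_j=\beta_j$ versus $\gamma_j$, $\Delta_j^i$ versus $\Theta_j^i$) is sound and, if anything, slightly more careful than the published argument.
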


\begin{proof}
Let $\lambda$ be an $H$-eigenvalue of $\mathcal{A}$. Suppose that $\lambda\notin\Psi(\mathcal{A})$.
 Then for any $i\in[n]$, $|a_{i\dots i}-\lambda|>|\alpha_i(\mathcal{A}-\lambda \mathcal{I})|$.
  Let  $i\neq j\in[n]$. Without loss of generality, assume $a_{i\dots i}\leq a_{j\dots j}$. If $\lambda \in (-\infty, a_{i\dots i})$, then
\begin{equation*}
|a_{i\dots i}-\beta_i(\mathcal{A})-\lambda|(|a_{j\dots j}-\beta_j(\mathcal{A})-\lambda|-\Delta_j^i(\mathcal{A}))>(\beta_j(\mathcal{A})-a_{ji\dots i})\Delta_i(\mathcal{A}).
\end{equation*}
If $\lambda \in (a_{i\dots i},a_{j\dots j})$, then
\begin{eqnarray*}
 |a_{i\dots i}-\gamma_i(\mathcal{A})-\lambda|(|a_{j\dots j}-\beta_j(\mathcal{A})-\lambda|-\Delta_j^i(\mathcal{A}))>(\beta_j(\mathcal{A})-a_{ji\dots i})\Theta_i(\mathcal{A}).
 \end{eqnarray*}
 If $\lambda \in (a_{j\dots j}, \infty)$, then
\begin{eqnarray*}
     |a_{i\dots i}-\gamma_i(\mathcal{A})-\lambda|(|a_{j\dots
     j}-\gamma_j(\mathcal{A})-\lambda|-\Theta_j^i(\mathcal{A})>(a_{ji\dots i}-\gamma_j(\mathcal{A}))\Theta_i(\mathcal{A}).
 \end{eqnarray*}
 Thus in all three cases we have,
 \begin{equation*}
\begin{aligned}
 &  |a_{i\dots i}-\lambda -\alpha_i(\mathcal{A} -\lambda \mathcal{I})|(|a_{j\dots j}- \lambda -\alpha_j(\mathcal{A}-\lambda\mathcal{I})|-\sum\limits_{\delta_{ij_2\dots j_m = 0}\atop\delta_{jj_2\dots j_m = 0}}|\alpha_j(\mathcal{A}-\lambda\mathcal{I})-a_{jj_2\dots j_m}|)\\
      & >|\alpha_j(\mathcal{A}-\lambda\mathcal{I})-a_{ji\dots i}|\sum\limits_{\delta_{ii_2\dots i_m = 0}}|\alpha_i(\mathcal{A}-\lambda\mathcal{I})-a_{ii_2\dots i_m}|.
\end{aligned}
\end{equation*}
By Theorem \ref{mk}, we have $\mathcal{A}-\lambda\mathcal{I}$ is a quasi-double ${\overline{B}}$-tensor. Also, by Theorem \ref{4l1} (b) and Lemma \ref{h6}, $0$ is not an $H$-eigenvalue of $\mathcal{A}-\lambda \mathcal{I}$. Thus there does not exit a nonzero vector $x \in \mathbb{R}^n$ such that $(\mathcal{A}-\lambda\mathcal{I})x^{m-1}=0$, which means $\lambda$ is not an $H$-eigenvalue of $\mathcal{A}$. This is a contradiction. Therefore, $\lambda\in\Psi(\mathcal{A})$.
\end{proof}

We call $\Lambda(\mathcal{A})$ and $\Psi(\mathcal{A})$ the double ${\overline{B}}$-intervals and quasi-double ${\overline{B}}$-intervals of $\mathcal{A}$, respectively.  Observe that the double ${\overline{B}}$-intervals of $\mathcal{A}$ and quasi-double ${\overline{B}}$-intervals have a nature similar to the Brauer-type eigenvalues inclusion set $\Omega(\mathcal{A})$ stated in Theorem \ref{s0g}. Now, a question is nature, when range is better for determining  the location of the  $H$-eigenvalues of $\mathcal{A}$. We will discuss this interesting question in the following section.

\section{Comparisons between Brauer-type eigenvalues inclusion set, double ${\overline{B}}$-intervals and quasi-double ${\overline{B}}$-intervals}\label{compare}

We begin this section with the following observation. Li et al., \cite[Proposition 4]{LL} proved that if $\mathcal{A}$ is a double $B$-tensor, then $\mathcal{A}$ is a quasi-double $B$-tensor. Thus we have, if $\mathcal{A}$ is a double $\overline{B}$-tensor, then $\mathcal{A}$ is a quasi-double $\overline{B}$-tensor. In the next theorem, we prove the  quasi-double ${\overline{B}}$-intervals is contained in the double ${\overline{B}}$-intervals.
\begin{theorem}\label{6g}
Let $\mathcal{A}=(a_{i_1...i_m})\in \mathcal{T}(\mathbb{R}^n,m)$  be an even order symmetric tensor and  $\lambda$ be an $H$-eigenvalue of $\mathcal{A}$.  Then,
\begin{equation}\label{kb}
\Psi(\mathcal{A})\subseteq\Lambda(\mathcal{A}),
\end{equation}
where $\Lambda(\mathcal{A})$ and $\Psi(\mathcal{A})$ are defined in $(\ref{sj})$ and $(\ref{hj})$, respectively.
\end{theorem}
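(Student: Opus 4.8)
The plan is to prove the equivalent statement $\mathbb{R}\setminus\Lambda(\mathcal{A})\subseteq\mathbb{R}\setminus\Psi(\mathcal{A})$: for every $x\in\mathbb{R}$, I will show that $x\notin\Lambda(\mathcal{A})$ implies $x\notin\Psi(\mathcal{A})$. The argument chains three facts, of which only the last is new. First, the proof of Theorem~\ref{4ss} shows --- before its concluding contradiction step --- that if $x\notin\Lambda(\mathcal{A})$, then $\mathcal{A}-x\mathcal{I}$ is a double $\overline{B}$-tensor. Second, by the observation recorded just before the present theorem (which rests on \cite[Proposition~4]{LL}), every double $\overline{B}$-tensor is a quasi-double $\overline{B}$-tensor; hence $\mathcal{A}-x\mathcal{I}$ is then a quasi-double $\overline{B}$-tensor. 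Third, I claim that if $\mathcal{A}-x\mathcal{I}$ is a quasi-double $\overline{B}$-tensor, then $x\notin\Psi(\mathcal{A})$. Composing the three implications gives $x\notin\Lambda(\mathcal{A})\Rightarrow x\notin\Psi(\mathcal{A})$, i.e. $\Psi(\mathcal{A})\subseteq\Lambda(\mathcal{A})$.

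The work lies in the third claim, which is the reasoning in the proof of Theorem~\ref{4nn} run in reverse. Put $\mathcal{B}=\mathcal{A}-x\mathcal{I}$; since $\mathcal{B}$ and $\mathcal{A}$ agree in all off-diagonal entries, the quantities $\beta_i,\gamma_i,\Delta_i,\Delta_j^i,\Theta_i,\Theta_j^i$ are unchanged, while $b_{i\dots i}=a_{i\dots i}-x$ and $\alpha_i(\mathcal{B})$ equals $\beta_i(\mathcal{A})$ or $\gamma_i(\mathcal{A})$ according to the sign of $a_{i\dots i}-x$. Assume $\mathcal{B}$ is a quasi-double $\overline{B}$-tensor and apply Theorem~\ref{mk} to it. The condition $|a_{i\dots i}-x|>|\alpha_i(\mathcal{B})|$ is exactly ``$a_{i\dots i}-x>\beta_i(\mathcal{A})$ or $a_{i\dots i}-x<\gamma_i(\mathcal{A})$'', i.e. $x\notin\Psi_i$, for each $i$, so $x\notin\bigcup_i\Psi_i$. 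Now fix $i\neq j$, say with $a_{i\dots i}\le a_{j\dots j}$ (the case $a_{i\dots i}\ge a_{j\dots j}$ is symmetric, with $\Psi^3_{ij}$ in place of $\Psi^2_{ij}$). If $x\in\{a_{i\dots i},a_{j\dots j}\}$ then $x$ lies in none of the open intervals defining $\Psi^1_{ij},\Psi^2_{ij},\Psi^4_{ij}$, so $x\notin\Psi_{ij}$; otherwise $x$ lies in exactly one of $(-\infty,a_{i\dots i})$, $(a_{i\dots i},a_{j\dots j})$, $(a_{j\dots j},\infty)$. In each of these three regimes the signs of $a_{i\dots i}-x$ and $a_{j\dots j}-x$ are fixed, hence so are $\alpha_i(\mathcal{B})$ and $\alpha_j(\mathcal{B})$; substituting them and identifying $\sum|\alpha_j(\mathcal{B})-a_{jj_2\dots j_m}|$ with $\Delta_j^i$ (first two regimes) or $\Theta_j^i$ (last regime) and $\sum|\alpha_i(\mathcal{B})-a_{ii_2\dots i_m}|$ with $\Delta_i$ (first regime) or $\Theta_i$ (last two regimes), the Brauer-type inequality of Theorem~\ref{mk} for $\mathcal{B}$ becomes precisely the strict reverse of the inequality defining $\Psi^1_{ij}$, $\Psi^2_{ij}$, or $\Psi^4_{ij}$ respectively, while $x$ misses the intervals of the other two pieces. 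Hence $x\notin\Psi_{ij}$ for all $i\neq j$, and so $x\notin\Psi(\mathcal{A})$, proving the claim.

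The hard part is this final casework: correctly matching each sign regime to its piece $\Psi^1_{ij}$, $\Psi^2_{ij}$ (or $\Psi^3_{ij}$), $\Psi^4_{ij}$, and checking that the $\alpha$-substitutions, together with the identifications of the restricted sums with $\Delta_j^i,\Theta_j^i,\Delta_i,\Theta_i$, really transform the abstract inequality of Theorem~\ref{mk} into the concrete defining inequalities of the $\Psi$-pieces. This is routine sign- and absolute-value bookkeeping --- it merely undoes the casework in the proof of Theorem~\ref{4nn} --- but it does use the elementary facts that $\gamma_j(\mathcal{A})\le a_{ji\dots i}\le\beta_j(\mathcal{A})$ and that $\gamma_i(\mathcal{A})\le a_{ii_2\dots i_m}\le\beta_i(\mathcal{A})$ for off-diagonal entries, which let the absolute values be resolved. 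A more computational alternative would compare each $\Psi^k_{ij}$ directly with an appropriate union of $\Lambda_i,\Lambda_j,\widetilde{\Lambda}_i,\widetilde{\Lambda}_j$ and $\Lambda^k_{ij}$ by means of the identity $\Delta_j(\mathcal{A})=\Delta_j^i(\mathcal{A})+(\beta_j(\mathcal{A})-a_{ji\dots i})$, but the complement route above is shorter and conceptually cleaner.
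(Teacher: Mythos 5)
Your proposal is correct and follows essentially the same route as the paper: the paper also passes to complements, deduces from $\lambda\notin\Lambda(\mathcal{A})$ via Theorem~\ref{ta5} that $\mathcal{A}-\lambda\mathcal{I}$ is a double $\overline{B}$-tensor, invokes the observation that double $\overline{B}$-tensors are quasi-double $\overline{B}$-tensors, and concludes $\lambda\notin\Psi(\mathcal{A})$ from Theorem~\ref{mk}. Your write-up merely makes explicit the sign casework behind that last step, which the paper leaves implicit.
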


\begin{proof}

Let $\lambda \notin \Lambda(\mathcal{A})$. Then, by Theorem \ref{ta5}, $\mathcal{A} - \lambda \mathcal{I}$ is a double $\overline{B}$-tensor. By the above observation, $\mathcal{A} - \lambda \mathcal{I}$ is a quasi-double $\overline{B}$-tensor. Thus, by Theorem \ref{mk}, we have $\lambda \notin \Psi(\mathcal{A})$. Hence $ \Psi(\mathcal{A}) \subseteq \Lambda(\mathcal{A})$.
\end{proof}

The following example shows that $\Psi(\mathcal{A})$ and Brauer-type eigenvalues inclusion set are not comparable.
\begin{example}\label{e2}
\emph{ Consider the symmetric tensors $\mathcal{A}_1=(a_{i_1i_2i_3i_4})$ and $\mathcal{A}_2=(b_{i_1i_2i_3i_4})$ of order 4 dimension 2 defined as follows:}
\begin{eqnarray*}
  &&a_{1111}=18, \  \ a_{2222}=20,   \ \  a_{1222}=a_{2122}=a_{2212}=a_{2221}=3,\\
  &&a_{1122}=a_{2211}=a_{1221}=a_{2112}=a_{2121}=a_{1212}=2,\\
   &&a_{1112}=a_{2111}=a_{1211}=a_{1121}=2,
\end{eqnarray*}
\emph{and}
\begin{eqnarray*}
  &&b_{1111}=2,  \ \  b_{2222}=6,   \ \  b_{1222}=b_{2122}=b_{2212}=b_{2221}=4,\\
  &&b_{1122}=b_{2211}=b_{1221}=b_{2112}=b_{2121}=b_{1212}=-2,\\
   &&b_{1112}=b_{2111}=b_{1211}=b_{1121}=5.
\end{eqnarray*}

 \emph{By using the methods of  Theorem \ref{s0g} and Theorem \ref{4nn}, one can get the location of $H$-eigenvalues of $\mathcal{A}_1$ and $\mathcal{A}_2$ listed in the following Table 1.}
\begin{center}
\noindent{ \begin{tabular}{lllll} \multicolumn{5}{c}
{{\bf Table
1}: \ Comparisons of Brauer-type eigenvalues inclusion set and quasi-double ${\overline{B}}$-intervals} \\

\hline

 && $\mathcal{A}_1$ && $\mathcal{A}_2$\\
\hline

Brauer-type eigenvalues inclusion set   & ~~~ & $[3, \ 36.6119]$       &  ~~~   & $[-22.2560, \ 28.6844]$               \\

 quasi-double ${\overline{B}}$-intervals   & ~~~& $[9, \ 36.6119]$    & ~~~ &  $[-24.9257, \ 32.6068]$   \\

$H$-eigenvalues & ~~~& $15$, \ \ $35.1469$    & ~~~ & $-20.2289$, \  \  $16.0666$    \\
 \hline

\end{tabular}}
\end{center}

\emph{As we can see in the table above, the quasi-double ${\overline{B}}$-intervals is better when estimating the location of $H$-eigenvalues of $\mathcal{A}_1$ while the Brauer-type eigenvalues inclusion set is more precise for locating the  $H$-eigenvalues of  $\mathcal{A}_2$.}

\emph{So, there is no indication that $\Psi(\mathcal{A})$ is tighter than $\Omega(\mathcal{A})$ or $\Omega(\mathcal{A})$ is tighter than $\Psi(\mathcal{A})$. They supplement each other.} $\Box$
\end{example}


Now, we construct a more precise region by using the results of Theorem \ref{s0g} and Theorem~\ref{4nn}.
\begin{theorem}\label{ff3}
Let $\mathcal{A}=(a_{i_1\dots i_m})\in \mathcal{T}(\mathbb{R}^n,m)$ be an even order symmetric tensor and $\Omega(\mathcal{A})$, $\Psi(\mathcal{A})$ be defined in $(\ref{dnnd})$, $(\ref{hj})$, respectively. If $\lambda$ is an $H$-eigenvalue of $\mathcal{A}$, then
\begin{equation}\label{ff4}
\lambda\in\Upsilon(\mathcal{A})=\Omega(\mathcal{A})\cap\Psi(\mathcal{A}).
\end{equation}
\end{theorem}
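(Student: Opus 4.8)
The plan is straightforward: Theorem~\ref{ff3} is an immediate consequence of the two inclusion theorems already established, namely Theorem~\ref{s0g} (the Brauer-type inclusion set $\Omega(\mathcal{A})$) and Theorem~\ref{4nn} (the quasi-double $\overline{B}$-intervals $\Psi(\mathcal{A})$). First I would let $\lambda$ be an arbitrary $H$-eigenvalue of the even order symmetric tensor $\mathcal{A}$. By Theorem~\ref{s0g}, which applies to every $H$-eigenvalue of any real tensor, we have $\lambda \in \Omega(\mathcal{A})$. Independently, by Theorem~\ref{4nn}, which applies to every $H$-eigenvalue of an even order symmetric tensor (and $\mathcal{A}$ is exactly such a tensor), we have $\lambda \in \Psi(\mathcal{A})$. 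Since $\lambda$ lies in both sets, it lies in their intersection, that is, $\lambda \in \Omega(\mathcal{A}) \cap \Psi(\mathcal{A}) = \Upsilon(\mathcal{A})$, which is the claim.

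The only point worth a sentence of commentary is making sure the hypotheses of both cited theorems are met. Theorem~\ref{s0g} requires only that $\mathcal{A} \in \mathcal{T}(\mathbb{R}^n,m)$ and that $\lambda$ be an $H$-eigenvalue — both hold by assumption. Theorem~\ref{4nn} additionally requires $m$ to be even and $\mathcal{A}$ symmetric — both are part of the hypothesis of Theorem~\ref{ff3}. So there is genuinely nothing to prove beyond invoking the two results and taking the intersection; there is no real obstacle here, and the "main difficulty" is merely bookkeeping that the region $\Upsilon(\mathcal{A})$ as defined in $(\ref{ff4})$ is the set we land in. I would keep the proof to two or three lines.

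\begin{proof}
Let $\lambda$ be an $H$-eigenvalue of $\mathcal{A}$. Since $\mathcal{A}\in\mathcal{T}(\mathbb{R}^n,m)$, Theorem~\ref{s0g} gives $\lambda\in\Omega(\mathcal{A})$. Since, in addition, $m$ is even and $\mathcal{A}$ is symmetric, Theorem~\ref{4nn} gives $\lambda\in\Psi(\mathcal{A})$. Therefore $\lambda\in\Omega(\mathcal{A})\cap\Psi(\mathcal{A})=\Upsilon(\mathcal{A})$, as claimed.
\end{proof}

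In presenting this I would also note, perhaps in a short remark following the proof, that Example~\ref{e2} shows $\Upsilon(\mathcal{A})$ can be strictly smaller than each of $\Omega(\mathcal{A})$ and $\Psi(\mathcal{A})$, so the intersection is a genuine refinement rather than a formal triviality; but that observation is outside the proof of the statement itself.
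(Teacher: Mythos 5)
Your proof is correct and is exactly the argument the paper intends (the paper omits the proof entirely, treating the theorem as an immediate corollary of Theorem~\ref{s0g} and Theorem~\ref{4nn}, which is precisely what you wrote). The hypothesis check and the two-line intersection argument are all that is needed.
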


\section{Concluding remarks}\label{conclude}
We introduced two new classes of tensors called  double $\overline{B}$-tensors and quasi-double $\overline{B}$-tensors and established some of their properties. Using the properties, we derived two  regions $\Lambda(\mathcal{A})$, $\Psi(\mathcal{A})$ called double $\overline{B}$-intervals, quasi-double ${\overline{B}}$-intervals, respectively. We observed that quasi-double ${\overline{B}}$-intervals are smaller than double ${\overline{B}}$-intervals. We discussed the relationship between Brauer-type eigenvalues inclusion set $\Omega(\mathcal{A})$ and the quasi-double ${\overline{B}}$-intervals  $\Psi(\mathcal{A})$ and constructed the region $\Upsilon(\mathcal{A})$, which is more precise for the location of the $H$-eigenvalues of even order symmetric tensors. We investigated the location of $H$-eigenvalues of even order symmetric tensors. Do the double ${\overline{B}}$-intervals $\Lambda(\mathcal{A})$ and the quasi-double ${\overline{B}}$-intervals $\Psi(\mathcal{A})$ hold for even order nonsymmetric tensors and odd order tensors?  In the future, we will research these problems.

\section*{Acknowledgement}

The authors would like to thank Prof. Yaotang Li for his useful comments. The second author is supported by the German-Israeli Foundation for Scientific Research and Development (GIF) grant no.\ 1135-18.6/2011.

\bibliographystyle{amsplain}
\bibliography{Brauer}
\end{document}